\newcolumntype{B}[1]{>{\begin{minipage}[b]{#1}\raggedright{}}c<{\end{minipage}\minrowheight}}
\newcommand{\minrowheight}{\rule{0pt}{8ex}\relax}
\theoremstyle{plain}
\newtheorem{Thm}{Theorem}
\newtheorem*{Thm*}{Theorem}
\newtheorem{Lem}[Thm]{Lemma}
\newtheorem{Cor}[Thm]{Corollary}
\newtheorem*{Cor*}{Corollary}
\newtheorem{Conj}[Thm]{Conjecture}
\theoremstyle{definition}
\newtheorem{Ex}[Thm]{Example}
\newtheorem{Def}[Thm]{Definition}
\theoremstyle{remark}
\newtheorem{Rem}[Thm]{Remark}
\def\@tocline#1#2#3#4#5#6#7{\relax
	\ifnum #1>\c@tocdepth 
	\else
	\par \addpenalty\@secpenalty\addvspace{#2}%
	\begingroup \hyphenpenalty\@M
	\@ifempty{#4}{%
		\@tempdima\csname r@tocindent\number#1\endcsname\relax
	}{%
		\@tempdima#4\relax
	}%
	\parindent\z@ \leftskip#3\relax \advance\leftskip\@tempdima\relax
	\rightskip\@pnumwidth plus4em \parfillskip-\@pnumwidth
	#5\leavevmode\hskip-\@tempdima
	\ifcase #1
	\or\or \hskip 1em \or \hskip 2em \else \hskip 3em \fi%
	#6\nobreak\relax
	\hfill\hbox to\@pnumwidth{\@tocpagenum{#7}}\par
	\nobreak
	\endgroup
	\fi}
\numberwithin{Thm}{section}
\newcommand{\Nef}{\operatorname{Nef}}
\newcommand{\Eff}{\operatorname{Eff}}
\newcommand{\Mov}{\operatorname{Mov}}
\newcommand{\Proj}{\operatorname{Proj}}
\newcommand{\Cl}{\operatorname{Cl}}
\newcommand{\Pic}{\operatorname{Pic}}
\newcommand{\Cox}{\operatorname{Cox}}
\newcommand{\Pf}{\operatorname{Pf}}
\newcommand{\GL}{\operatorname{GL}}
\newcommand{\Int}{\operatorname{Int}}
\newcommand{\wt}{\operatorname{wt}}
\newcommand\lcm{\mo{lcm}}
\newcommand\mo[1]{\operatorname{#1}}
\newcommand\actL[1]{\multirow{2}{*}{$#1:$} \close}
\newcommand\actR[1]{\close \multirow{2}{*}{\bigg)#1}}
\newcommand\lBr{\rule{0pt}{2.5ex} \multirow{2}{*}{\bigg(} \close}
\newcommand\close{\!\!\!\!}
\newcommand{\Z}{\mathbb{Z}}
\author{Livia Campo, Tiago Duarte Guerreiro}
\title{Non-solidity of uniruled varieties}
\date{}
\begin{document}
	\makeatletter
	\newcommand{\subjclass}[2][2020]{%
		\let\@oldtitle\@title%
		\gdef\@title{\@oldtitle\footnotetext{#1 \emph{Mathematics subject classification.} #2}}%
	}
	\newcommand{\keywords}[1]{%
		\let\@@oldtitle\@title%
		\gdef\@title{\@@oldtitle\footnotetext{\emph{Key words and phrases.} #1.}}%
	}
	\makeatother

\maketitle

\begin{abstract} 
	We give conditions for a uniruled variety of dimension at least 2 to be non-solid.\ This study provides further evidence to a conjecture by Abban and Okada on the solidity of Fano 3-folds.\ To complement our results we write explicit birational links from Fano 3-folds of high codimension embedded in weighted projective spaces.
\end{abstract}

\textbf{Keywords.} Uniruled varieties, Solidity, Fano 3-fold

\textbf{Mathematics Subject Classification.} 14E05, 14E08, 14E30, 14J45

\section{Introduction}  \label{sect:intro}

We work over the field of complex numbers.\ Let $W$ be a smooth projective variety where $K_W$ is not pseudo-effective. Then, the Minimal Model Program yields a birational model of $V$ of $W$ with a fibre structure $\sigma \colon V \rightarrow S$ of relative Picard rank $1$ where $V$ has mild singularities and $-K_V$ is relatively ample, called a Mori fibre space (see \cite[Corollary~1.3.3]{BCHM}).\ 
The birational classification of Mori fibre spaces can be divided into two classes: those that are birational to a strict fibration, and those that are not.\ The notion that encodes this nature is the one of solidity (cf \cite[Definition~1.4]{hamidokada}).\ We extend their definition to the following.\

\begin{Def}
We say that a uniruled variety of dimension at least two is \textit{birationally solid} if it is not birational to a strict Mori fibre space, that is, to a Mori fibre space $\sigma \colon V \rightarrow S$ where $\dim S >0$.
\end{Def}

Birational solidity implies irrationality in a strong sense.\  In this paper, we establish sufficient conditions for a uniruled variety $X$ to admit a birational map to a strict Mori fibre space.\ Indeed, for $X$ a normal projective variety such that $-K_X$ is $\mathbb{Q}$-Cartier and big, $X$ is uniruled (see Lemma \ref{lem:uni} below).\

\begin{Thm*}[= Theorem \ref{thm:main}]
Let $X$ be a normal Cohen-Macaulay projective variety and $K_X$ be $\mathbb{Q}$-Cartier.\  
Suppose that $-(K_X+lA)$ is big, where $l \coloneqq \lcm{(m_0,m_1)}$ for some $m_0,m_1$ positive integers, and $A \in \Cl(X)$ is an ample $\mathbb{Q}$-Cartier Weil divisor.\ 
If there are two sections $s_i \in H^0(X,m_iA)$ for $i=0,1$ which are independent in the graded ring $R(X,A)\coloneqq \bigoplus_{m\geq 0} H^0(X,mA)$, then $X$ is non-solid.\ 
\end{Thm*}
Let $X$ be Fano $d$-fold with terminal $\mathbb{Q}$-factorial singularities and $A$ a generator of $\Cl(X)$. Then $-K_X = \iota_X A$ and $\iota_X$ is called the Fano index of $X$. We consider $X$ to be polarised by $A$, that is, together with an embedding given by the ring of sections $R(X,A)$.
For each $d$, this produces a list of thousands of candidate Fano $d$-folds embedded in weighted projective spaces. As a result of our main theorem, we prove that Fano varieties tend to stabilise into strict Mori fibrations as its Fano index increases.
\begin{Cor*}[= Corollary \ref{cor:mainold}] Let $X$ be a $\mathbb{Q}$-factorial terminal Fano $d$-fold, with $d\geq 3$, and $A \in \Cl(X)$ a generator of the class group of $X$.\ Consider the embedding given by the ring of sections of $A$
\begin{equation*}
    X \hookrightarrow \mathbb{P}(a_0,\ldots , a_N)
\end{equation*}
Suppose that $\lcm(a_i,a_j) < \iota_X$ for some $i,j \leq N$.\ Then, $X$ is non-solid.\ 
\end{Cor*}
The list of Fano $d$-folds has only been produced for up to $d=3$ and already in this case there are dozens of thousands of candidate Fano 3-folds. Although Corollary \ref{cor:mainold} can be widely applied, it does not detect all Fano 3-folds which are non-solid, (for instance, because the Fano index is too small) and an explicit analysis is needed to complete the picture.\

Concerning those families with Fano index 1 and at most terminal cyclic quotient singularities, there are many birational rigidity results (see \cite{BRhyp,okadaI} for complete intersections,  \cite{hamidokada} for Pfaffian Fano 3-folds, and \cite{okadacod4} for codimension 4 Fano 3-folds).\ 
The situation changes completely for higher Fano index where it has been shown that each is birationally non-rigid and most of them, if not all, are non-solid (cf \cite{hamidvanyapark,GuerreiroThesis}).\ In higher codimension the situation seems to stabilise in the sense that every such Fano is thought to be birational to a strict Mori fibration.\

The explicit construction of Fano 3-folds in codimension 4 and Fano index 2 has been achieved by \cite{CoughlanDucat} first, and then by \cite{CampoC4I2}; in this paper we refer to the latter (see Section \ref{sect:cons} for details).\ We get the following result: 

\begin{Thm} \label{thm:mainC4I2version2}
Let $X$ be a quasi-smooth $\mathbb{Q}$-factorial terminal Fano 3-fold with $-K_X \sim 2A$, where $A$ is a generator of $\Cl(X)$. Suppose that the embedding given by the ring of sections of $A$ is in codimension 4.  Then $X$ is birationally non-rigid. Moreover, if $X$ is not in the families \#39890, \#39928, and \#39660 then there is at least one deformation family of $X$ that is non-solid.\
\end{Thm}

In Table \ref{tab:dimlinsys} we list the families of codimension 4 and Fano index 2 Fano 3-folds that fall in the description of Corollary \ref{cor:mainold}, identified by their Graded Ring Database ID (GRDB, \cite{grdb}).\ We explicitly examine the remaining families in Section \ref{sec:links and mds}, thus proving \ref{thm:mainC4I2version2}.

Our case study also highlights an interesting phenomenon (end of Case II.a below), already occurring in \cite[Section 4.4]{GuerreiroThesis}, that is: in one instance (\#39660) the birational link terminates with a divisorial contraction to a 3-fold embedded in a fake weighted projective space (\cite{buczynska2008fake,KasprzykFakeWPS}).\ Moreover, we are able to compute the Picard rank of the two families \#40671 and \#40672 that behave \`{a} la Ducat \cite{ducat} (Section 4.3), which is equal to 2 in both cases.\ Indeed, this method can be applied in the computation of the Picard rank of certain Fano varieties.

Our work gives evidence to following conjecture

\begin{Conj} \label{conj:1}
Suppose $X$ is a Fano $d$-fold. If $\iota_X$ is high enough then $X$ is non-solid.
\end{Conj}

\paragraph{Acknowledgements}
Both authors would like to express their gratitute to Hamid Abban, Gavin Brown, Ivan Cheltsov and Takuzo Okada.\ The first author is partially supported by the EPSRC grants EP/N022513/1, EP/P021913/1, by the Japan Society for the Promotion of Science (JSPS), and by the Korea Institute for Advanced Study (KIAS), grant No. MG087901.\ The second author is supported by EPSRC grants ref.\ EP/T015896/1 and EP/V055399/1.

\section{Non-solidity}  \label{sect:mainthm}

In this section we prove our main Theorem \ref{thm:main} and Corollary \ref{cor:mainold}.\ The proof of Theorem \ref{thm:mainC4I2version2} is instead contained in Subsection \ref{sub:wrap}.\ 
The authors learnt afterwards about the existence of a more general version of Lemma \ref{lem:uni} (cf \cite[Lemma 3.18]{LazicPeternellTsakanikas}); the following version is nontheless kept for reference. 

\begin{Lem} \label{lem:uni}
Let $X$ be a normal projective variety and $D$ an effective $\mathbb{Q}$-divisor on $X$ such that $-(K_X+D)$ is $\mathbb{Q}$-Cartier and big.\ Then $X$ is uniruled.\
\end{Lem} 

\begin{proof}
We take a resolution of singularities
 $\psi \colon X' \rightarrow X$ and write
\begin{equation*}
    K_{X'}+D'\sim_{\mathbb{Q}} \psi^*(K_{X}+D)+E
\end{equation*}
where $E$ is $\psi$-exceptional and $\psi_*D'=D$. Suppose that $X$ is not uniruled. Then $X'$ is not uniruled and by \cite[Corollary~0.3]{BDPP}, $K_{X'}$ is pseudo-effective.\ Therefore $K_{X} \sim_{\mathbb{Q}} \psi_*K_{X'}$ is pseudo-effective which contradicts the fact that $-K_{X}$ is big.\ We conclude that $X$ is uniruled.
\end{proof}

\begin{Thm} \label{thm:main}
Let $X$ be a normal Cohen-Macaulay projective variety and $K_X$ be $\mathbb{Q}$-Cartier.\  
Suppose that $-(K_X+lA)$ is big, where $l \coloneqq \lcm{(m_0,m_1)}$ for some $m_0,m_1$ positive integers, and $A \in \Cl(X)$ is an ample $\mathbb{Q}$-Cartier Weil divisor.\ 
If there are two sections $s_i \in H^0(X,m_iA)$ for $i=0,1$ which are independent in the graded ring $R(X,A)\coloneqq \bigoplus_{m\geq 0} H^0(X,mA)$, then $X$ is non-solid.\ 
\end{Thm}

\begin{proof}
Since $s_0$ and $s_1$ are independent in $R(X,A)$, the linear system $|lA|$ contains a pencil. Let $\pi \colon X \dashrightarrow \mathbb{P}^1$ be the map given by the sections $s_0$ and $s_1$ and call $F$ the generic fibre of $\pi$.\

Let $\nu \colon F^{\nu} \rightarrow F$ be the normalisation of $F$. By subadjunction, see Lemma \cite[Corollary~5.1.9]{mmprob}, there is an effective $\mathbb{Q}$-divisor $D$ such that
\begin{equation*}
K_{F^{\nu}}+D \sim_{\mathbb{Q}} \nu^*((K_X+F)|_F) \sim_{\mathbb{Q}}
\nu^*((K_X+lA)|_F) \; .
\end{equation*} 
Since $-(K_X+lA)$ is a big $\mathbb{Q}$-Cartier divisor it follows that $-(K_{F^{\nu}}+D)$ is big and $\mathbb{Q}$-Cartier.\ By Lemma \ref{lem:uni}, it follows that $F^{\nu}$ is uniruled.\ Resolving the indeterminacy of $\pi$, we get a commutative diagram,
\begin{center} 
\begin{tikzcd}
 & \widetilde{X} \arrow[swap]{dl}{\psi} \arrow{dr}{\varphi} \\
X \arrow[dashed,swap]{rr}{\pi} && \mathbb{P}^1
\end{tikzcd}
\end{center} 
where $\widetilde{X}$ is a smooth projective variety and $\varphi$ is onto.\ Let $\widetilde{F}$ be the proper transform of $F^\nu$ on $\widetilde{X}$.\ Then, $\widetilde{F}$ is uniruled and we can run a $K_{\widetilde{X}}$-MMP on $\widetilde{X}$ over $\mathbb{P}^1$.\ This is a (finite) sequence of divisorial contractions and small modifications $\chi$, fitting into the diagram
\begin{center}
\begin{tikzcd}
  \widetilde{X} \arrow[dashed]{drr}{\chi} \arrow[swap]{dd}{\varphi} & &  \\
 &  &Y \arrow{d}{\varphi'}  \\
\mathbb{P}^1 && B \arrow{ll}{\sigma}
\end{tikzcd}
\end{center}
where $\varphi' \colon Y \rightarrow B$ is a Mori fibre space where $\dim B > 0$ and $\sigma$ is a surjective morphism with connected fibres.\
\end{proof}

\begin{Cor} \label{cor:mainold} Let $X$ be a $\mathbb{Q}$-factorial terminal Fano $d$-fold, with $d\geq 3$, and $A \in \Cl(X)$ a generator of the class group of $X$.\ Consider the embedding given by the ring of sections of $A$
\begin{equation*}
    X \hookrightarrow \mathbb{P}(a_0,\ldots , a_N)
\end{equation*}
Suppose that $\lcm(a_i,a_j) < \iota_X$ for some $i,j \leq N$.\ Then, $X$ is non-solid.\ 
\end{Cor}
\begin{proof}
Consider the map $\pi \colon X \dashrightarrow \mathbb{P}^1(a_i,a_j)$ which is the restriction of the projection 
\begin{align*}
    \mathbb{P}(a_0,\ldots,a_N)&\dashrightarrow \mathbb{P}^1(a_i,a_j).\\
    [x_0:\ldots:x_N] &\mapsto [x_i:x_j]
\end{align*}
The generic fibre $F$ is a hypersurface in $X$ given by
\begin{equation*}
F \colon (x_i^{l/a_i}+\lambda x_j^ {l/a_j}=0)\subset X
\end{equation*}
and therefore $F\sim lH$. Then $K_X+F$ is $\mathbb{Q}$-Cartier and
\begin{equation*}
   -(K_X+F)\sim (\iota_X-l)H 
\end{equation*}
is ample. By Theorem \ref{thm:main} it follows that $X$ is non-solid.\
\end{proof}

Corollary \ref{cor:mainold} generalises, and retrieves, the result of Abban-Cheltsov-Park \cite[Theorem 1.2]{hamidvanyapark} for Fano 3-fold hyperurfaces.

\section{Case Study: Fano 3-folds in Codimension 4 and index 2}  \label{sect:cons}

In this section we want to discuss a specialisation of Corollary \ref{cor:mainold} when $N=7$ and $\iota_X = 2$ when the dimension of $X$ is 3.\ Corollary \ref{cor:mainold} does not give an explicit description of the birational map between $X$ and the strict Mori fibre space $Y \rightarrow S$.\ 
In this context, we explicitly recover such birational map using the Sarkisov Program.\ 
As an immediate consequence, we have the following corollary.\ 

\begin{Cor} \label{cor:nonsolid 2}
Let $X$ be a family in Table \ref{tab:dimlinsys}.\ Then $X$ is not solid.\
\end{Cor}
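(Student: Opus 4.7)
The plan is to invoke Corollary \ref{cor:nonsolid} verbatim. Each entry of Table \ref{tab:dimlinsys} is, by construction of the table, a quasi-smooth $\mathbb{Q}$-factorial terminal Fano 3-fold of Picard rank $1$, codimension $4$ and Fano index $\iota_X = 2$, embedded in a weighted projective space $\mathbb{P}(a_0,\ldots,a_N)$ via the anticanonical subring \eqref{eqn: embed}, with the weights arranged in nondecreasing order.

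The only step to check is the hypothesis $a_0 = a_1 = 1$. I would verify this family by family by reading off the GRDB entry (and equivalently the Hilbert numerator) for each of the $11$ IDs listed in the table, confirming that each of these ambient weighted projective spaces indeed begins with two weight-$1$ coordinates. This is a finite, purely combinatorial check against the database and is the content selection criterion that was used to assemble Table \ref{tab:dimlinsys} in the first place.

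Once $a_0 = a_1 = 1$ is verified in each case, we have $l = \mathrm{lcm}(a_0,a_1) = 1 < 2 = \iota_X$, so the hypotheses of Corollary \ref{cor:nonsolid} (equivalently, of Theorem \ref{thm:main}) are satisfied. Applying Corollary \ref{cor:nonsolid} produces a birational map from $X$ to a Mori fibre space $Y \to S$ with $\dim S > 0$, which by definition means that $X$ is not birationally solid.

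There is no real obstacle: the substance of the statement lives entirely in Theorem \ref{thm:main}, and the corollary is a packaging result. The only mild care needed is to ensure that the projection $\pi\colon X \dashrightarrow \mathbb{P}^1$ from the proof of Theorem \ref{thm:main} is genuinely defined and dominant for the listed families, but this is immediate once two distinct weight-$1$ coordinates $x_0, x_1$ are available on the ambient weighted projective space.
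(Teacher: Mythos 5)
Your proposal is correct and is exactly the paper's argument: the paper states Corollary \ref{cor:nonsolid 2} as an immediate consequence of Corollary \ref{cor:nonsolid}, the point being that the families in Table \ref{tab:dimlinsys} are precisely those with $\dim |A| \geq 2$, so the anticanonical embedding \eqref{eqn: embed} has at least two weight-$1$ generators and hence $a_0=a_1=1$, giving $l=1<2=\iota_X$. The family-by-family check against the GRDB that you describe is exactly the selection criterion used to assemble the table, so there is nothing further to add.
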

\begin{table}[ht]
    \centering
    \begin{tabular}[t]{c c}
        \toprule
        ID                      & $h^0(X,A)$      \\ \midrule
        \#40360   & 2  \\
        \#40370  & 2 \\
        \#40371  & 2  \\
        \#40399   & 2 \\
        \bottomrule
    \end{tabular}\hfill%
    \begin{tabular}[t]{c c}
        \toprule
        ID                    & $h^0(X,A)$      \\ \midrule
        \#40400   & 2  \\
        \#40407   & 2 \\
        \#40663  & 3 \\
        \#40671  & 3  \\
        \bottomrule
    \end{tabular}\hfill%
    \begin{tabular}[t]{c c}
        \toprule
        ID                     & $h^0(X,A)$ \\ \midrule
        \#40672   & 3  \\
        \#40933   & 5 \\
        \#41028  & 8 \\
        \bottomrule
    \end{tabular}
\caption{Families for which $h^0(X,A) \geq 2$}
\label{tab:dimlinsys}
\end{table}
However, there are further 23 Fano 3-folds of index 2 and codimension 4 in the GRDB that are not included in Table \ref{tab:dimlinsys}.\ In Theorem \ref{thm:mainC4I2version2} we claim that, except for three of such Fano 3-folds, at least one deformation family for each of the remaining 20 is non-solid.

We prove Theorem \ref{thm:mainC4I2version2} by studying the birational links initiated by blowing up a Type I centre (or a Type II$_2$ centre for families \#39569 and \#39607) in each of the 23 families, as in Subsection \ref{subsect: Kawamata blowup} and Section \ref{sec:links and mds}.\ 
Such analysis of these birational links relies on the explicit description of index 2 Fano 3-folds in codimension 4.\ 

Several approaches to their explicit construction can be found in the literature.\ Prokhorov and Reid \cite{prokhorovreid} build one family in codimension 4 via a divisorial extraction of a specific curve in $\mathbb{P}^4$ and running the Sarkisov Program starting with such extraction.\ 
In \cite{ducat}, Ducat generalises their construction, recovering the family of Prokhorov-Reid and finding two new deformation families in codimension 4 having the same Hilbert series (cf \cite[Section 3]{T&Jpart1}.\ Of the latter two families, we study only one in this paper (the other can be treated in a similar manner, although we do not include it here for brevity reasons); in particular, we retrieve the birational links of \cite{prokhorovreid,ducat} (see Subsection \ref{subsect: comparison with Ducat}).\ 

To Coughlan and Ducat \cite{CoughlanDucat} it is due a different approach to constructing Fano 3-folds that relies on rank 2 cluster algebras.\

More recently, Campo \cite{CampoC4I2} has constructed a total of 52 families of codimension 4 Fano 3-folds of index 2 by means of equivariant unprojections, some of which correspond to the same Hilbert series, also in accordance to \cite{ducat,CoughlanDucat}.\ We mostly refer to this approach in the rest of this paper.\ In Subsection \ref{subsect:unproj for index 2} we give a brief overview of the construction in \cite{CampoC4I2}.\

\subsection{Construction} \label{subsect:unproj for index 2}

Let $\bar{X} \subset w\mathbb{P}^7$ be a quasi-smooth codimension 4 $\mathbb{Q}$-Fano 3-fold and $\bar{Z} \subset w\mathbb{P}^6$ a codimension 3 $\mathbb{Q}$-Fano 3-fold, both of index $\iota=1$, and suppose that $\bar{X}$ is obtained as Type I unprojection of $Z$ at a divisor $D \cong w \mathbb{P}^2$ embedded as a complete intersection inside $\bar{Z}$ (for a detailed study of Type I unprojections, see \cite{KustinMiller,PapadakisReidKM,PapadakisComplexes,T&Jpart1}).\ 
For $W$ a set of seven positive non-zero integers, call $x_0,x_1,x_2,y_1,y_2,y_3,y_4,s$ the coordinates of $w\mathbb{P}^7 = \mathbb{P}^7(1,W)$, and consider $\gamma$ the $\Z/2\Z$-action on $w\mathbb{P}^7$ that changes sign to the coordinate of $x_0$ of weight 1, that is,
\begin{equation}
    \gamma \colon \left( x_0, x_1, x_2, y_1, y_2, y_3, y_4, s \right) \mapsto \left( -x_0, x_1, x_2, y_1, y_2, y_3, y_4, s \right) \; .
\end{equation}
Here, the divisor $D$ is defined by the ideal $I_D \coloneqq \langle y_1,\dots,y_4 \rangle$.\ 
Provided that the equations of $\bar{X}$ are invariant under $\gamma$, it is possible to perform the quotient of $\bar{X}$ by $\gamma$.\ 
The 3-fold $X \coloneqq \bar{X}\!/\!\gamma$ obtained as such is Fano, has terminal singularities, is quasi-smooth, its ambient space is $\mathbb{P}^7(2,W)$, and has index $\iota=2$ (cf \cite[Lemmas 3.1, 3.2, 3.3, 3.4]{CampoC4I2}).\ The coordinates of $\mathbb{P}^7(2,W)$ are $\xi,x_1,x_2,y_1,y_2,y_3,y_4,s$, where $\xi \coloneqq x_0^2$.\
This construction can be summarised by the following diagram.\
\begin{equation} \label{index 2 diagram}
\xymatrix{
	\iota=1 & \bar{X} \ar[d]_{\Z/2\Z}^\gamma & & \bar{Z} \ar@{-->}[ll]_{\text{unprojection}} \\
	\iota=2 & X & & }
\end{equation}
The key point is to find an appropriate index 1 double cover $\bar{X}$ for $X$ of index 2.\ The double cover is ramified on the half elephant $\left| -\frac{1}{2} K_X \right|$, and the equations of $X$ are inherited from the ones of $\bar{X}$ (see \cite[Theorem 1.1]{CampoC4I2}).\ 

Recall from \cite{T&Jpart1} that there are between two and four different deformation families for $\bar{X}$ of index 1 sharing the same Hilbert series.\ They are derived from just as many so-called \textit{formats} for the $5\times 5$ antisymmetric graded matrix $M$ whose five maximal Pfaffians determine the equations of $\bar{Z}$.\ 
These formats, defined by specific constraints on the polynomial entries of the matrix $M$ (cf \cite[Definition 2.2]{T&Jpart1}), are called \textit{Tom} and \textit{Jerry} formats.\ Accordingly, $\bar{X}$ is said to be either of \textit{Tom type} or of \textit{Jerry type} (cf \cite[Definition 2.2]{CampoSarkisov}).\ Not all the formats are compatible with the double-cover construction, that is, not all formats descend to index 2.\ 
However, exactly one Tom format always does.\ 
A criterion to determine which formats in index 1 become formats in index 2 via the double-cover construction can be found in \cite[Theorems 4.3, 5.1]{CampoC4I2}; this exhausts all the Tom and Jerry formats.\ 

When a Type I unprojection is employed, we only focus on $\bar{X}$ of Tom type, which represent 32 families out of the 34 we study in this paper.\ In this context, $\bar{X}$ is a general member in its Tom family, provided the $\gamma$-invariance.\ Thus, $X$ is general under the above constraints.\

A close analysis of the unprojection equations of $\bar{X}$, and therefore of $X$ gives crucial insights on the behaviour of the birational links from $X$.\ The unprojection equations of $\bar{X}$ are of the form $s y_i = g_i(x_0, x_1, x_2, y_1, \dots, y_4)$ for $1 \leq i \leq 4$ (cf \cite[Theorem 4.3]{PapadakisComplexes}).\ 
Consequently, four of the nine equations defining $X$ are of the form 
\begin{equation*}
s y_i = g_i(\xi, x_1, x_2, y_1, \dots, y_4) \qquad \text{for }1 \leq i \leq 4 \, ;  
\end{equation*}
with a little abuse of notation, we call them \textit{unprojection equations} as well.\ 
The point $\mathbf{p}_s \in X$ is a Type I centre (\cite[Theorem 3.2]{T&Jpart1} and \cite[Lemma 3.4]{CampoC4I2}).\ Note that the unprojection variable $s$ appears linearly in the unprojection equations of $\bar{X}$ and $X$, hence the point $\mathbf{p}_s \in X$ is called a \textit{linear cyclic quotient singularity} in \cite[Definition 3.28]{GuerreiroThesis}; we will occasionally use this nomenclature in the following.\ The equations of $X$ are therefore of the form
\begin{equation*}
    \left( \Pf_i = sy_j-g_j=0, \,\, 1\leq i \leq 5,\, 1\leq j\leq 4 \right)
\end{equation*}
for $\Pf_i,\,g_j \in \mathbb{C}[\xi,x_1,x_2,y_1,\ldots,y_4]$.\ 
From \cite[Lemma 3.5]{CampoSarkisov} we have that each unprojection equation of the index 1 double cover $\bar{X}$ contains at least one monomial only in the orbinates $x_0,x_1,x_2$.\ The following lemma is a consequence of the  constraint of having $\bar{X}$ invariant under the action $\gamma$.\

\begin{Lem} \label{lem:RHS unproj} 
Consider the four unprojection equations $s y_i = g_i$ of $X$ for $1 \leq i \leq 4$, and suppose that $\wt(x_1)$ is even.\ Then, there are at least three $g_i$ that are of the form $g_i = f_i(\xi,x_1) + h_i(\xi, x_1, x_2, y_1, \dots, y_4)$.\ 
\end{Lem}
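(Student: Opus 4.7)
The plan is to combine the $\gamma$-invariance of $\bar X$ with \cite[Lemma 3.5]{CampoSarkisov}, and then exploit a weight-parity argument using the hypothesis $\wt(x_1)$ even.

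First, because $\gamma$ fixes $s$ and each $y_j$ but sends $x_0 \mapsto -x_0$, the $\gamma$-invariance of the equations of $\bar X$ forces each $g_i$ to be $\gamma$-invariant. Hence every monomial of $g_i$ has even $x_0$-degree, and after substituting $\xi = x_0^2$ we may view $g_i \in \mathbb{C}[\xi, x_1, x_2, y_1, \dots, y_4]$. By \cite[Lemma 3.5]{CampoSarkisov}, each $g_i$ contains a pure orbinate monomial, which by $\gamma$-invariance must be of the form $\xi^{a_i} x_1^{b_i} x_2^{c_i}$ for some $a_i, b_i, c_i \geq 0$.

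The key observation is a parity one: since $\wt(\xi) = 2$ and $\wt(x_1)$ are both even, every pure $(\xi, x_1)$ monomial has even weight. Consequently, if $\wt(g_i) = \wt(s) + \wt(y_i)$ is odd, then $g_i$ cannot contain any pure $(\xi, x_1)$ monomial, so $f_i \equiv 0$, and the orbinate monomial from \cite[Lemma 3.5]{CampoSarkisov} must involve $x_2$ with positive (odd) exponent (and $\wt(x_2)$ must be odd). A ``bad'' $g_i$ therefore forces $\wt(g_i)$ to be odd.

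It thus suffices to show that at most one of the four weights $\wt(s) + \wt(y_i)$ is odd, and moreover that each even-weight $g_i$ actually contains a pure $(\xi, x_1)$ monomial. Both reduce to a finite parity analysis of the tuple $(\wt(s), \wt(y_1), \dots, \wt(y_4))$ constrained by the Tom format of the Pfaffian matrix $M$ and the $\gamma$-invariant double-cover construction of \cite[Section 3]{CampoC4I2}. The main obstacle is a clean uniform verification: I would inspect the admissible weight profiles in \cite{CampoC4I2} to confirm that at most one $\wt(y_i)$ has opposite parity to $\wt(s)$, while the explicit form of the Papadakis relations $s y_i = g_i$ as minors of the Tom-format $M$ simultaneously ensures that in the even-weight cases the relevant entry of $M$ contributes a monomial purely in $\xi$ and $x_1$, yielding the required nontrivial $f_i(\xi, x_1)$.
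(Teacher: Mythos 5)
Your proposal follows the same broad strategy as the paper (a weight--parity reduction plus a structural analysis of the unprojection equations), but the two steps that carry the actual content are left as declared intentions rather than proved, and one of them cannot be closed by the route you indicate. First, the parity count: the paper pins down that $\wt(s)$ is \emph{odd} by invoking terminality --- an index-$2$ Fano 3-fold has only cyclic quotient singularities of odd order (\cite[Lemma 1.2 (3)]{SuzukiIndexBound}) --- and then observes that at least three of $y_1,\dots,y_4$ have odd weight, so at least three of the products $s y_i$ have even weight. You instead propose to ``inspect the admissible weight profiles''; without the terminality input fixing the parity of $\wt(s)$, the claim that at most one $\wt(s)+\wt(y_i)$ is odd is not established.

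Second, and more seriously: even granting the parity count, your argument only yields that each even-weight $g_i$ contains a pure \emph{orbinate} monomial $\xi^{a}x_1^{b}x_2^{c}$ with $c$ even (since $\wt(x_2)$ is odd). Parity does not force $c=0$; a monomial such as $\xi^{a}x_1^{b}x_2^{2}$ survives your argument but is not of the form $f_i(\xi,x_1)$. This is exactly where the paper does its real work: it fills the even-degree entries $p_i\not\in I_D$ of the Tom-format matrix $M$ with monomials in $\xi,x_1$, and tracks these entries through the Papadakis construction of the unprojection equations (the matrices $N_j$, $Q$ and the $3\times 3$ minors of $\hat{Q}_i$, with the $p_i$ factor removed via \cite[Lemma 5.3]{PapadakisComplexes}) to show that at least three of the $g_i$ genuinely acquire a nonzero $f_i(\xi,x_1)$ --- the ``at least three'' arising from the worst-case placement of those entries in a $2\times 3$ block. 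Your sentence asserting that ``the explicit form of the Papadakis relations \dots simultaneously ensures'' this is precisely the missing argument, not a proof of it.
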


\begin{proof}
Since $\iota_X=2$, for terminality reasons the basket of singularities of $X$ consists only of cyclic quotient singularities with odd order (cf \cite[Lemma 1.2 (3)]{SuzukiIndexBound}).\ 
Consequently, the weight of the unprojection variable $s$ is always odd.\ By hypotheses, the weights of $\xi$ and $x_1$ are even, so the orbinate $x_2$ has odd weight.\ 
By direct observation, at least three of the coordinates $y_1,\dots,y_4$ have odd weight.\ 
Note that if $s y_i$ for some $1 \leq i \leq 4$ has odd weight, the corresponding $g_i$ does not contain any pure monomial in $\xi, x_1$; that is, $\wt(y_i)$ must be odd for it to happen.\ 

We briefly recall the notation of unprojection equations necessary to this proof; for the full details of the construction we refer to \cite[Section 5.3]{PapadakisComplexes} and \cite[Appendix]{CampoSarkisov}.\ 
To fix ideas, suppose that the matrix $M$ is in Tom$_1$ format; the proof for the other Tom formats is analogous.\ Such matrix is of the form
\begin{equation*}
	M =\left(
	\begin{array}{c c c c}
	p_1 & p_2 & p_3 & p_4 \\
	& q_1 & q_2 & q_3 \\
	& & q_4 & q_5 \\
	& & & q_6
	\end{array}
	\right) 
\end{equation*}
for $p_i \not\in I_D$ and $q_i \in I_D$ homogeneous polynomials in the given degrees of $M$.\ 
Without loss of generality, we can fill the entries of $M$ with linear monomials when the Tom constraints and the degree prescription on $M$ allow us to do so (for details see \cite[Section 6.2]{T&Jpart1}).\ 
In this context, at least three of the $q_i$ can be filled with one of the $y_1,\dots,y_4$.\ By homogeneity of the Pfaffians, at least two of the $p_i$ have even degree; thus, $\xi$ and $x_1$ can occupy those entries (not necessarily linearly).\ 
Define the matrices $N_j$ as 
\begin{equation*} 
N_j =
\left(
\begin{array}{c c c c}
p_1 & p_2 & p_3 &  p_4 \\
& \alpha^j_{23} & \alpha^j_{24} & \alpha^j_{25} \\
& & \alpha^j_{34} & \alpha^j_{35} \\
& & & \alpha^j_{45}
\end{array}
\right) 
\end{equation*}
where $\alpha^j_{kl}$ is the coefficient of $y_j$ in $q_{kl}$.\ Let $Q$ be the $4 \times 4$ matrix $ Q = \left( \Pf_i(N_j) \right)_{i,j=1 \dots 4}$, where $\Pf_i(N_j)$ is calculated by excluding the $(i + 1)$-th row and column of $N_j$ for $i=1, \dots 4$.\
The polynomials $g_i$ in the right-hand side of the unprojection equations are given by $g_i = \frac{1}{p_i} \hat{Q}_i$, where $\hat{Q}_i$ is the $3 \times 3$ matrix obtained deleting the $i$-th row and column of $Q$.\

Since some of the polynomials $q_i$ are quasi-linear in the $y_i$, the polynomials $p_i$ having even degrees are multiplied by 1 at least twice in the Pfaffians of the matrices $N_j$ (also note that the matrices we consider have always weights as in \cite[Equation (4.2) and (4.3)]{CampoSarkisov}).\ 
Thus, at least three entries of each row of the $4\times 4$ matrix $Q$ contain a monomial purely in $\xi, x_1$.\ 
So, at least two of these entries are included in a $3\times 3$ minor of $Q$.\ 
Then, by taking the determinant of $\hat{Q}_i$ we have that $g_i$ has a monomial purely in $\xi, x_1$ up to a $p_i$ factor, which gets simplified in the definiton of $g_i$ thanks to \cite[Lemma 5.3]{PapadakisComplexes}.\ 
At worst, the two entries of $\hat{Q}_i$ containing pure monomials in $\xi, x_1$ are all concentrated in a $2\times 3$ block (or two $1\times 3$ blocks).\ 
This implies that only three of the $g_i$ have the desired monomials $f_i(\xi,x_1)$.\ Otherwise, all $g_i$ contain $f_i(\xi,x_1)$.\
\end{proof}
By Lemma \ref{lem:RHS unproj} we have that the four unprojection equations of $X$ are of the form 
\begin{equation} \label{eqn: RHS unproj}
\begin{cases}
sy_i = f_i(\xi,x_1)+h_i(\xi,x_1,x_2,y_1\ldots,y_4),\quad 1 \leq i \leq 3 \\
sy_4 = h_4(\xi,x_1,x_2,y_1\ldots,y_4)   
\end{cases}
\end{equation}
where $f_i$ is not identically zero by quasi-smoothness of $X$ and $\wt(y_4)$ is even.\

\begin{Lem} \label{lem: f_i alg independent}
Suppose that $\wt(y_4)$ is even.\ Then, the polynomials $f_i(\xi,x_1)$ for $1 \leq i \leq 3$ are algebraically independent.\
\end{Lem}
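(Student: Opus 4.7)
The plan is to combine the parity arithmetic already exploited in Lemma \ref{lem:RHS unproj} with the explicit Papadakis formula for the unprojection equations, and then argue by genericity of the Tom entries. I will first pin down which of the four $f_i$ are actually nonzero, then extract explicit expressions for $f_1,f_2,f_3$ from the Tom matrix, and finally rule out algebraic relations between them.

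First, I would set up the parity bookkeeping. As in the proof of Lemma \ref{lem:RHS unproj}, the index-$2$ hypothesis together with the Suzuki bound (\cite[Lemma 1.2]{SuzukiIndexBound}) forces $\wt(s)$ odd, while in the setting where Lemma \ref{lem:RHS unproj} applies $\wt(\xi)=2$ and $\wt(x_1)$ is even. An unprojection equation $sy_i=g_i$ can contain a pure $(\xi,x_1)$-monomial only when $\wt(sy_i)$ is even, i.e.\ when $\wt(y_i)$ is odd. The assumption $\wt(y_4)$ even then gives $f_4=0$, and Lemma \ref{lem:RHS unproj} forces the remaining three coordinates $y_1,y_2,y_3$ to all have odd weight with $f_1,f_2,f_3$ all nonzero. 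In particular, the three $f_i$ in the statement are precisely the pure $(\xi,x_1)$-parts of the three unprojection equations attached to the odd-weight $y_i$.

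Next, I would write the $f_i$ explicitly. By \cite[Theorem 4.3]{PapadakisComplexes} each $g_i$ is (up to a $p_i$ factor that cancels by \cite[Lemma 5.3]{PapadakisComplexes}) a determinant of a $3\times 3$ submatrix $\widehat Q_i$ of the matrix $Q$ built from $M$ in Tom format, and $f_i$ is the contribution to this determinant coming from entries filled by monomials purely in $\xi,x_1$. Since the only entries of $M$ of even weight available for $\xi,x_1$ are the non-ideal entries $p_1,\dots,p_4$ (Tom format), each $f_i$ is a weighted-homogeneous polynomial in $\xi,x_1$ expressible as a specific sum of products $p_j p_k$ corresponding to a different triple of rows/columns of $Q$.

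Finally, the core of the argument. In a polynomial ring of transcendence degree $2$ three elements cannot be algebraically independent in the strict sense, so I read the claim as pairwise algebraic independence: no two $f_i,f_j$ lie in a common subring $\mathbb{C}[h]\subset\mathbb{C}[\xi,x_1]$. For weighted-homogeneous polynomials this is equivalent to showing that for every pair $(i,j)$ one cannot write $f_i=P(h)$, $f_j=Q(h)$ for a common weighted-homogeneous $h$ of smaller degree and univariate $P,Q$. I would verify this by exploiting the genericity of $\bar X$ in its Tom family (recorded at the end of Subsection \ref{subsect:unproj for index 2}): the $p_1,\dots,p_4$ entries may be chosen generically within their weight constraints, and for a generic such choice the three distinct determinantal combinations producing $f_1,f_2,f_3$ cannot all factor through a single weighted-homogeneous polynomial in $\xi,x_1$. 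The main obstacle is precisely this last reduction: translating genericity of the matrix entries into pairwise algebraic independence of the $f_i$. I expect this to be handled by a dimension count, showing that the locus in the affine parameter space of admissible $(p_1,\dots,p_4)$ on which some pair $(f_i,f_j)$ becomes algebraically dependent is a proper closed subset, hence avoided by the generic $\bar X$ we are working with.
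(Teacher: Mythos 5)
There is a genuine gap, and it starts with how you read the statement. You correctly notice that three elements of $\mathbb{C}[\xi,x_1]$ can never be algebraically independent in the strict sense, but the reinterpretation you choose (pairwise algebraic independence, i.e.\ no two $f_i,f_j$ lying in a common subring $\mathbb{C}[h]$) is not what the lemma is used for. In Lemma \ref{lem:iso} the conclusion drawn from Lemma \ref{lem: f_i alg independent} is that the common zero locus of $f_1,f_2,f_3$ is \emph{empty}; that is the actual content. Pairwise algebraic independence does not give this: for instance $\xi(\xi-x_1)$, $\xi(\xi-2x_1)$, $\xi(\xi-3x_1)$ are pairwise algebraically independent yet all vanish on $\xi=0$. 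So even if your dimension-count step were carried out, it would prove a statement that is insufficient for the application. Moreover, that step is only announced, not executed: ``I expect this to be handled by a dimension count'' is precisely the part that would need an argument, and genericity of the Tom entries is a delicate thing to invoke here because the admissible entries are constrained by the $\gamma$-invariance and the degree prescriptions, so the locus of ``bad'' choices is not obviously proper.

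The paper's proof is much shorter and uses a completely different mechanism, which your proposal misses: restrict the equations \eqref{eqn: RHS unproj} to the locus $(y_1=\dots=y_4=x_2=0)$, where the only surviving conditions are $f_1=f_2=f_3=0$. A common zero of the $f_i$ would therefore be a point of $X$ lying in the stratum where all odd-weight coordinates vanish; since $\wt(\xi)$ and $\wt(x_1)$ are both even, such a point is a cyclic quotient singularity of even order, which is forbidden for a quasi-smooth terminal Fano $3$-fold of index $2$ by \cite[Lemma 1.2 (3)]{SuzukiIndexBound}. Hence the $f_i$ have no common zero. This argument needs no genericity of the matrix entries and no explicit determinantal expression for the $f_i$ --- only the parity bookkeeping you set up in your first step, plus terminality of $X$. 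If you want to salvage your write-up, replace the third paragraph entirely with this terminality argument and restate the conclusion as ``$V(f_1,f_2,f_3)=\emptyset$ in $\mathbb{P}(\wt\xi,\wt x_1)$'' rather than as an independence statement.
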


\begin{proof}
We refer to the notation introduced in Lemma \ref{lem:RHS unproj} and in Equation \eqref{eqn: RHS unproj}.\ 
Consider the equations of $X$ in Equation \eqref{eqn: RHS unproj} and evaluate them at $(y_i = x_2 =0)$ for $1 \leq i \leq 4$.\ Hence we get the system of polynomial equations $f_i(\xi,x_1) = 0$ for $1 \leq i \leq 3$.\
Now suppose that the $f_i(\xi,x_1)$ are not algebraically independent: they therefore have a common solution, that is, a finite set of points in $X$.\ Such points are quotient singularities with even order.\ This is not possible as $X$ cannot have singularities with even order (cf \cite[Lemma 1.2 (3)]{SuzukiIndexBound}), as it would not be quasi-smooth.\ Thus, $f_i(\xi,x_1)$ for $1 \leq i \leq 3$ are algebraically independent.\
\end{proof}

The remaining two families (GRDB ID \#39569 and \#39607) that we investigate here are constructed in a similar fashion as in diagram \eqref{index 2 diagram}: in these cases, their respective double covers $\bar{X}$ only have Type II$_2$ centres, or worse (see \cite[Section 1]{PapadakisEqnsTypeII1}, \cite[Definition 2.2, Theorem 2.15]{PapadakisTypeIIUnproj} for Type II unprojections, and \cite[Section 3.6]{T&Jpart1}, \cite{ReidKinosaki} for Type II centres).\ Hence, the Tom and Jerry formats are not applicable as $\bar{X}$ is obtained via Type II$_2$ unprojections from a Fano hypersurface: see \cite[Section 7]{CampoC4I2} for the construction of these double covers.\

Among the Hilbert series listed in the GRDB there is also the smooth \#41028; there are two distinguished deformation families associated to this Hilbert series: these are the classical examples of a divisor of bidegree $(1,1)$ in $\mathbb{P}^2 \times \mathbb{P}^2 \subset \mathbb{P}^8$ and $\mathbb{P}^1 \times \mathbb{P}^1 \times \mathbb{P}^1 \subset \mathbb{P}^7$ (cf \cite[Section 2]{T&Jpart1}).\ These are rational, and therefore non-solid.\ This confirms the statement of Theorem \ref{thm:main} also in the case of~\#41028.\

\subsection{Lift under the Kawamata blowup} \label{subsect: Kawamata blowup}

We want to initiate a birational link by blowing up a cyclic quotient singularity on $X$.\ In order to understand what the equations of the blowup $Y$ are, we first explain how the sections in $H^0(X,mA)$ lift to $Y$ under a Kawamata blowup of $\mathbf{p_s} \in X$, for $m\geq 1$.\

Recall that by \cite[Theorem 4.3]{PapadakisComplexes}, \cite[Theorem 3.2]{T&Jpart1}, and \cite[Lemma 3.4]{CampoC4I2}, $\mathbf{p_s}$ is a linear cyclic quotient singularity of $X$ as in \cite[Definition 2.6.1]{GuerreiroThesis}; locally, $\mathbf{p_s} \sim \frac{1}{a_s}(a_0,a_1,a_2)$.\ In the notation introduced in Subsection \ref{subsect:unproj for index 2}, $a_0,a_1,a_2$ are the weights of the orbinates $\xi,x_1,x_2$.\

We can assume (cf \cite[Lemma 1.2 (3)]{SuzukiIndexBound}) that $a_0=2$ is equal to the Fano index of $X$.\ Moreover, since $\mathbf{p_s}$ is terminal, $\gcd(a_s,a_0a_1a_2)=1$ and, in particular, there is $k \in \mathbb{Z}$ such that $ka_0 \equiv 1 \pmod{a_s}$.\ Denote by $\overline{a} < a_s$ the unique remainder of $a \bmod{a_s}$.\ Then,
\begin{equation*}
\mathbf{p_s} \sim \frac{1}{a_s}(1,\overline{ka_1},\overline{ka_2}) \sim \frac{1}{a_s}(1,\overline{ka_1},a_s-\overline{ka_1})=\frac{1}{a_s}\bigg(1,\frac{a_1}{2},a_s-\frac{a_1}{2}\bigg) \; .\
\end{equation*}

\begin{Lem} \label{lem:lift}
Let $\varphi \colon Y \rightarrow X$ be the Kawamata blowup centred at $\mathbf{p_s}$.\ Then 
\begin{align*}
&s \in H^0(Y,-m_1K_Y+m_2E) && \text{for some $m_1,\,m_2>0$;}\\
&\xi \in H^0(Y,-K_Y) , \qquad
x_1 \in H^0(Y,-mK_Y) && \text{for some $m>0$;}\\
&z_i \in H^0(Y,-m_iK_Y-n_iE) && \text{for some $m_i,\,n_i>0$}
\end{align*}
where $z_i$ is one of $x_2,\, y_1,\,y_2,\,y_3,\,y_4$.
\end{Lem}

\begin{proof}
Recall that for the Kawamata blowup $\varphi \colon Y \rightarrow X$ we have $K_Y=\varphi^*(K_X)+\frac{1}{r}E $, where $\frac{1}{r}$ is its discrepancy and $r$ is the index of the cyclic quotient singularity.\ Let $x$ be one of the sections above and $\nu$ be its vanishing order at $E$.\ Then,
\begin{align*}
x &\in H^0\left(Y,\frac{a_i}{2}\varphi^*(-K_X)-\nu E \right) \\ 
    &= H^0\left(Y,\frac{a}{2}(-K_Y+\frac{1}{a_s}E)-\nu E \right)\\
    &= H^0\left(Y,-\frac{a}{2}K_Y+\frac{a-2a_s\nu}{2a_s}E \right) \; .
\end{align*}

By the description of $\mathbf{p_s}$ it follows that $\xi$ vanishes at $E$ with order $\nu=\frac{1}{a_s}$.\ Similarly, the vanishing order of $x_1$ and $x_2$ at $E$ is $\nu=\frac{a_1}{2a_s}$ and $\nu=\frac{2a_s-a_1}{2a_s}$.\ Hence, 
\begin{equation*}
\xi \in H^0(Y,-K_Y), \quad x_1 \in H^0\left(Y,-\frac{a_1}{2}K_Y \right), \quad x_2 \in H^0\left(Y,-\frac{a_2}{2}K_Y-\frac{1}{2}E \right).\
\end{equation*}

On the other hand, $ s \in H^0(Y,\frac{a_s}{2}\varphi^*(-K_X))= H^0(Y,-\frac{a_s}{2}K_Y+\frac{1}{2}E)$.\ 

Starting from \eqref{eqn: RHS unproj}, we compute the vanishing order of $y_i,\, 1 \leq i\leq 3$ at $E$.\ The monomials of $f_i(\xi,x_1)$ are of the form $\xi^{\alpha}x_1^{\beta}$ and if $d_i$ is the homogeneous degree of the equation $sy_i-g_i=0$, we have $\alpha a_0 + \beta a_1 = d_1$.\ On the other hand such monomials vanish at $E$ with order
\begin{equation*}
\alpha\cdot \frac{1}{a_s} + \beta \cdot \frac{a_1}{2a_s} = \frac{d_i}{2a_s} < 1 \;.
\end{equation*}
Hence, $\xi^{\alpha}x_1^{\beta}$ is pulled back by $\varphi$ to $\xi^{\alpha}x_1^{\beta}t^{d_i/2a_s}$ where $E \coloneqq (t =0)$ is the exceptional divisor.\ Since $\frac{d_i}{2a_s}<1$, when saturating the ideal of $Y$ with respect to $t$, we find that $\xi^{\alpha}x_1^{\beta}t^{d_i/2a_s}$ becomes $\xi^{\alpha}x_1^{\beta}$.\ Hence $sy_i-g_i = 0$ is a divisor in $|-\frac{d_i}{2}K_Y|$ and we conclude that 
$
y_i \in H^0\left(Y, -\frac{d_i-a_s}{2}K_Y-\frac{1}{2}E\right)
$.\

For $y_4$, since $sy_4-h_4=0$ contains no pure monomials in $\xi,x_1$ we can only say that $y_4$ vanishes at $E$ with order $\nu$ at least $\frac{\wt(y_4)+a_s}{2a_s}$; therefore $\frac{\wt(y_4)-2a_s\nu}{2a_s} \leq -\frac{1}{2}$ with equality when the vanishing order is exactly $\frac{\wt(y_4)+a_s}{2a_s}$.\ In other words,
\begin{equation*}
y_4 \in  H^0\left(Y,-\frac{\wt(y_4)}{2}K_Y-m_4E \right), \quad m_4 \geq \frac{1}{2}>0 \; . \qedhere
\end{equation*}
\end{proof}

The Lemma below follows from Lemma \ref{lem:lift} and Lemma \ref{lem:RHS unproj}.\
\begin{Lem} \label{lem:eqnsY}
Let $X \subset w\mathbb{P}^7$ be defined by the equations $(\Pf_i = sy_j-g_j=0, \,\, 1\leq i \leq 5,\, 1\leq j\leq 4) $, 
where $\Pf_i,\,g_j \in \mathbb{C}[\xi,x_1,x_2,y_1,\ldots,y_4]$.\ 
Then, the Kawamata blow up $Y$ of $X$ at $\mathbf{p_s} \in X$ is defined by equations $(\Pf_i = sy_j-g_j=0, \,\, 1\leq i \leq 5,\, 1\leq j\leq 4) $, 
with $\Pf_i,\,g_j \in \mathbb{C}[t,\xi,x_1,x_2,y_1,\ldots,y_4]$, where $sy_j-g_j \in |-m_jK_Y|$ for exactly three values of $j$ and $\Pf_i\in |-m_iK_Y-n_iE|$, with $n_i>0$.\ 
\end{Lem}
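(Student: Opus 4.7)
The strategy is to combine Lemma \ref{lem:lift}, which records the divisor class on $Y$ of every defining coordinate, with Lemma \ref{lem:RHS unproj} and the Tom-format structure of the Pfaffians, and then to examine each defining equation of $X$ monomial by monomial, tracking which $E$-contributions survive saturation by the exceptional parameter $t$.

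First I would read off the $E$-coefficients from Lemma \ref{lem:lift}: $\xi$ and $x_1$ contribute $0$, $x_2$ and $y_1, y_2, y_3$ contribute $-\frac{1}{2}E$, $y_4$ contributes $-m_4 E$ with $m_4 \geq \frac{1}{2}$, and $s$ contributes $+\frac{1}{2}E$. The anticanonical weights on each side of every equation already match on $X$, so the only work is on the $E$-component.

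For the three unprojection equations $sy_j - g_j$ with $1 \leq j \leq 3$, Lemma \ref{lem:RHS unproj} produces a pure monomial $\xi^\alpha x_1^\beta$ inside $g_j$ whose $E$-vanishing order is $\frac{d_j}{2 a_s} < 1$, exactly as computed inside the proof of Lemma \ref{lem:lift}. Saturating by $t$ therefore strips the $E$-factor from the whole equation, yielding a section of $\left|-\frac{d_j}{2}K_Y\right|$; on the other side the contributions $+\frac{1}{2}E$ from $s$ and $-\frac{1}{2}E$ from $y_j$ cancel, matching the class. For the Pfaffians, the Tom$_k$ constraint forces every Pfaffian to lie in the ideal $I_D = \langle y_1, \ldots, y_4 \rangle$, so each of its monomials carries at least one $y_k$-factor (possibly combined with $x_2$); consequently its class on $Y$ is automatically of the form $\left|-m_i K_Y - n_i E\right|$ with $n_i > 0$, and no saturation is forced. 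The fourth unprojection equation $s y_4 - h_4$ is handled the same way, since by Lemma \ref{lem:RHS unproj} $h_4$ has no pure $\xi, x_1$ monomial and every monomial contains a factor from $\{x_2, y_1, \ldots, y_4\}$, so it remains in a class with strictly negative $E$-coefficient.

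The main obstacle is bookkeeping rather than conceptual: one has to check that the strict inequality $\frac{d_j}{2 a_s} < 1$ is uniform across every monomial of $f_j$ so that saturation removes exactly one power of $t$, and one has to verify the Tom-format assertion that every Pfaffian monomial is divisible by a generator of $I_D$, with the argument adapted for the two Type II$_2$ families \#39569 and \#39607 whose equations come from \cite[Section 7]{CampoC4I2} rather than a $5 \times 5$ Pfaffian matrix.
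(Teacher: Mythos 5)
Your proposal is correct and follows the same route the paper intends: the paper gives no separate argument for this lemma beyond the remark that it ``follows from Lemma \ref{lem:lift} and Lemma \ref{lem:RHS unproj}'', and your monomial-by-monomial tracking of the $E$-coefficients (baseline order $\tfrac{d}{2a_s}$ attained by $sy_j$ and by the pure monomials $\xi^\alpha x_1^\beta$ of $g_j$ for $j\le 3$, strict excess of at least $\tfrac12$ for every monomial of the Pfaffians and of $h_4$ because each is divisible by one of $x_2,y_1,\dots,y_4$) is exactly the computation those two lemmas are set up to supply. Your closing caveats (uniformity of the minimal vanishing order and the separate treatment of the Type~II$_2$ families) are reasonable bookkeeping points and do not indicate a gap.
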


\section{Birational Links and Mori Dream Spaces} \label{sec:links and mds}

We use the language of Mori Dream Spaces to study the birational links discussed later in this Section.\ 
We recall the definition of Mori Dream Space as in \cite{mdsGIT} and some of their properties within the context of the Sarkisov Program, as explained in  \cite{Corti95,hamidquartic,hamidmds,2raybrown}.\
This section is devoted to give a theoretical background to show that we can perform the 2-ray game for codimension 4 index 2 Fano 3-folds.\

\begin{Def}[{\cite[Definition~1.10]{mdsGIT}}]
A normal projective variety $Z$ is a \textit{Mori Dream Space} if the following hold
\begin{itemize}
	\item $Z$ is $\mathbb{Q}$-factorial and $\Pic(Z)$ is finitely generated;
	\item $\Nef(Z)$ is the affine hull of finitely many semi-ample line bundles;
	\item There exists a finite collection of small $\mathbb{Q}$-factorial modifications $f_i \colon Z \dashrightarrow Z_i$ such that each $Z_i$ satisfies the previous point and $\overline{\Mov}(Z)= \bigcup f_i^*(\Nef(Z_i))$.\
\end{itemize}
\end{Def}

\begin{Rem}
As pointed out in \cite[Remark~2.4]{OkawaMDS}, if we work over a field which is not the algebraic closure of a finite field, then the condition that $\Pic(Z)$ is finitely generated is equivalent to $\Pic(Z)_{\mathbb{R}} \simeq N^1(Z)_{\mathbb{R}}$.\  
\end{Rem}

In characteristic zero, it is known that any klt pair $(Z, \Delta)$ where $Z$ is $\mathbb{Q}$-factorial and $-(K_Z+\Delta)$ is ample is a Mori Dream Space (see \cite[Corollary~1.3.2]{BCHM}).\ Examples include weak Fano varieties.\  

We have the following lemma.\

\begin{Lem}[{\cite[Proposition~1.11 (2)]{mdsGIT}}]
Let $Z$ be a Mori Dream Space.\ Then, there are finitely many birational contractions $g_i \colon Z \dashrightarrow Z_i$ where for each $i$, $Z_i$ is a Mori Dream Space and
\begin{equation*}
\overline{\Eff}(Z)= \bigcup_i \mathcal{C}_i,\quad \mathcal{C}_i =  g_i^*(\Nef(Z_i))+\mathbb{R}_+[E_1]+ \cdots +\mathbb{R}_+[E_k], 
\end{equation*}
where $E_1,\ldots, E_k$ are the prime divisors contracted by $g_i$.\ 
If $Z_i$ and $Z_j$ are in adjacent chambers, then they are related by a small $\mathbb{Q}$-factorial modification.\
\end{Lem}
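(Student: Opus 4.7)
The plan is to invoke the Hu--Keel correspondence between Mori Dream Spaces and GIT quotients of affine varieties. Since $Z$ is a Mori Dream Space, an equivalent formulation of the definition (established in \cite{mdsGIT}) asserts that the Cox ring $R := \operatorname{Cox}(Z) = \bigoplus_{D \in \operatorname{Pic}(Z)} H^0(Z, D)$ is a finitely generated $\mathbb{C}$-algebra. First I would pass to the affine scheme $\mathcal{Z} := \operatorname{Spec}(R)$, which carries a natural action of the N\'eron--Severi torus $T := \operatorname{Hom}(\operatorname{Pic}(Z), \mathbb{C}^*)$ whose character lattice is $\operatorname{Pic}(Z)$. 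For each rational character $L$ in the effective cone, GIT produces a quasi-projective quotient $\mathcal{Z}/\!/_L T$, and one checks that $\mathcal{Z}/\!/_L T = Z$ whenever $L$ lies in $\operatorname{Nef}(Z)$.

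Next I would apply the variation-of-GIT machinery of Dolgachev--Hu and Thaddeus to the $T$-action on $\mathcal{Z}$: this produces a decomposition of $\overline{\operatorname{Eff}}(Z)$ into finitely many rational polyhedral chambers $\mathcal{C}_i$, with the property that the GIT quotient $Z_i := \mathcal{Z}/\!/_L T$ depends only on the chamber $\mathcal{C}_i$ containing $L$. Each such $Z_i$ is itself realised as a GIT quotient of an affine variety by the same torus $T$, so the Hu--Keel criterion shows that $Z_i$ is again a Mori Dream Space. The associated birational contraction $g_i \colon Z \dashrightarrow Z_i$ arises from comparing semistable loci, and the prime divisors $E_1, \dots, E_k$ it contracts correspond precisely to the extremal rays of $\mathcal{C}_i$ which parametrise rigid effective classes that are not pulled back from $Z_i$.

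From here the cone identification $\mathcal{C}_i = g_i^*\operatorname{Nef}(Z_i) + \mathbb{R}_+[E_1] + \cdots + \mathbb{R}_+[E_k]$ follows by analysing the stable base loci of divisors as $L$ varies inside $\mathcal{C}_i$: the first summand captures the classes that become semi-ample on the contracted model, while the extremal rays $\mathbb{R}_+[E_j]$ absorb the fixed components. The final claim is a wall-crossing dichotomy: if two adjacent chambers $\mathcal{C}_i, \mathcal{C}_j$ share a wall lying in the interior of $\overline{\operatorname{Mov}}(Z)$, the GIT semistable loci upstairs differ only in codimension at least $2$, so the induced map $Z_i \dashrightarrow Z_j$ is an isomorphism in codimension one, hence a small $\mathbb{Q}$-factorial modification; if instead the wall lies on the boundary of $\overline{\operatorname{Mov}}(Z)$, an entire divisor is destabilised and one obtains a genuine divisorial contraction. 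The delicate step, where I expect the main work to lie, is precisely this wall-crossing analysis: one must track how the unstable locus in $\mathcal{Z}$ changes as $L$ crosses a wall $W$, and verify that the induced rational map between the quotients drops dimension exactly when $W$ meets the boundary of the movable cone rather than its interior.
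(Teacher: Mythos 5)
The paper does not prove this lemma at all: it is imported verbatim as a citation to \cite[Proposition~1.11(2)]{mdsGIT} (Hu--Keel), so there is no in-paper argument to compare against. What you have written is, in outline, precisely Hu--Keel's own proof: finite generation of $\Cox(Z)$, the torus action on $\mathcal{Z}=\operatorname{Spec}\Cox(Z)$, the Dolgachev--Hu/Thaddeus chamber decomposition of the $G$-ample cone, identification of the chambers with Mori chambers, and wall-crossing to relate adjacent quotients. So your route is the correct one and coincides with the source of the citation rather than with anything in this paper. Two caveats. First, as written this is a plan, not a proof: you explicitly defer the substantive step (tracking how the unstable locus changes across a wall and showing that the quotient map is an isomorphism in codimension one exactly for interior walls of $\overline{\Mov}(Z)$), and that is where all the work in \cite{mdsGIT} actually sits, so the proposal cannot be called complete. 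Second, two small imprecisions: the identification $\mathcal{Z}/\!/_L T\cong Z$ holds for $L$ \emph{ample} (i.e.\ in the interior of $\Nef(Z)$), not for every nef $L$ --- on the boundary of the nef cone the quotient is a contraction of $Z$; and the grading of the Cox ring by $\Pic(Z)$ only yields the statement about $\overline{\Eff}(Z)\subset N^1(Z)_{\mathbb{R}}$ because, for a Mori Dream Space, $\Pic(Z)_{\mathbb{R}}\simeq N^1(Z)_{\mathbb{R}}$ (the remark following the definition in the paper); that identification should be invoked explicitly before passing between characters of $T$ and divisor classes.
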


We have the following result:
\begin{Lem}[{\cite[Lemma~2.9]{hamidquartic}}]
Let $X$ be a $\mathbb{Q}$-Fano 3-fold and $\varphi \colon Y \rightarrow X$ be a divisorial extraction.\ Then $\varphi$ initiates a Sarkisov link if and only if the following hold:
\begin{enumerate}
    \item $Y$ is a Mori Dream Space;
    \item If $\tau \colon Y \dashrightarrow Y'$ is a small birational map and $Y'$ is $\mathbb{Q}$-factorial, then $Y'$ is terminal;
    \item $[-K_Y] \in \Int(\overline{\Mov}(Y))$.\
\end{enumerate}
\end{Lem}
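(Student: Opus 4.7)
The plan is to exploit that $\rho_Y = 2$ (since $X$ has Picard rank $1$ and $\varphi$ extracts a single divisor $E$), so a Sarkisov link initiated by $\varphi$ would be exactly a 2-ray game on $Y$: a finite sequence of small $\mathbb{Q}$-factorial modifications $Y = Y_0 \dashrightarrow Y_1 \dashrightarrow \cdots \dashrightarrow Y_n$ ending in a Mori extremal contraction, either a divisorial contraction onto a $\mathbb{Q}$-Fano $X'$, or a Mori fibre structure on $Y_n$. I would show that conditions (1)--(3) characterise exactly when this 2-ray game can be played out.

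For the forward direction, assume such a link exists. The finite chain $\{Y_i\}$ induces a chamber decomposition of $\overline{\Mov}(Y)$ whose walls are the flopping/flipping loci, and combined with finite generation of $\Pic(Y)$ (automatic when $\rho_Y = 2$), this yields (1). Condition (2) is immediate from the definition of a Sarkisov link, whose intermediate $Y_i$ lie in the category of $\mathbb{Q}$-factorial terminal varieties. For (3), I would argue that if $[-K_Y]$ were on the boundary of $\overline{\Mov}(Y)$, then either $\varphi$ would already be the relevant anticanonical model (so no link is initiated), or running the anticanonical MMP on $Y$ would contract $E$ back to $X$ before crossing any wall.

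For the converse, assume (1)--(3). By (1), $\overline{\Mov}(Y)$ has a finite chamber decomposition; since $\rho_Y = 2$, its chambers are linearly ordered along a one-dimensional slice transverse to $E$. Starting in the chamber whose interior contains $[-K_Y]$ (available by (3)), I would cross walls outward one at a time; by (2), each wall-crossing is a genuine flip or antiflip between $\mathbb{Q}$-factorial terminal threefolds. After finitely many crossings the slice exits $\overline{\Mov}(Y)$ at the opposite ray of $\overline{\Eff}(Y)$, and this final wall determines either a divisorial contraction of $Y_n$ to a terminal $\mathbb{Q}$-Fano $X'$, or a Mori fibre space structure on $Y_n$, thereby completing the Sarkisov link.

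The main obstacle is the subtle role of condition (3): it simultaneously excludes the degenerate situation where $Y$ itself is the anticanonical model and the situation where the 2-ray game terminates inside $\overline{\Mov}(Y)$ at a wall containing $[-K_Y]$. In practice, verifying $[-K_Y] \in \Int(\overline{\Mov}(Y))$ requires explicit access to the Cox-ring-type chamber decomposition of $Y$, which is the main technical input when applying this lemma to concrete families such as the codimension 4 Fano 3-folds studied later in the paper.
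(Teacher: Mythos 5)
The paper does not actually prove this statement: it is quoted verbatim from \cite[Lemma~2.9]{hamidquartic}, so there is no in-paper argument to compare against. Your overall strategy --- reading the link as the 2-ray game on the rank-2 Mori Dream Space $Y$, i.e.\ as a chamber-by-chamber traversal of $\overline{\Mov}(Y)$ --- is the standard one and matches how the lemma is deployed later in the paper.

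That said, the execution has genuine gaps. A small but real error first: finite generation of $\Pic(Y)$ is not ``automatic when $\rho_Y=2$'' (finite Picard \emph{rank} does not give a finitely generated Picard \emph{group}); what one actually uses is $H^1(Y,\mathcal{O}_Y)=0$, available here by Kawamata--Viehweg vanishing. Second, in the converse you propose to ``start in the chamber whose interior contains $[-K_Y]$''; the game does not start there. It is forced to start at $\Nef(Y)$, one of whose boundary rays is $\varphi^*(-K_X)$, and to proceed outward; condition (3) plays no role in selecting a starting chamber. Third, and most seriously, the actual content of the equivalence involving (3) is never established. Conditions (1) and (2) already guarantee that the game runs through finitely many terminal $\mathbb{Q}$-factorial models to the far boundary ray $M_2$ of $\overline{\Mov}(Y)$ and ends with a divisorial contraction or a fibration; what (3) buys is precisely that this final extremal contraction is $K$-negative, so that its target is a $\mathbb{Q}$-Fano or a strict Mori fibre space. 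Proving this requires the intersection computation in the spirit of Lemma~\ref{lem:wall2}: writing $-K_{Y_n}=\alpha A_{n-1}+\beta M_2$, one has $\alpha>0$ exactly when $[-K_Y]$ lies strictly on the $\Nef(Y)$-side of $M_2$, whence $-K_{Y_n}\cdot C=\alpha\,(A_{n-1}\cdot C)>0$ for the curves $C$ contracted at the last step; conversely the last contraction is crepant or $K$-positive when $[-K_Y]$ lies on or beyond $M_2$. Your description of (3) as excluding the case where ``$Y$ is the anticanonical model'' or where ``the game terminates inside $\overline{\Mov}(Y)$'' misses this: the game never terminates in the interior of the movable cone of a Mori Dream Space, and the genuine failure mode is a $K$-trivial or $K$-positive final contraction. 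Without this step neither implication of the ``if and only if'' concerning (3) is actually proved.
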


It is not true that the blowup of a Mori Dream Space is a Mori Dream Space (see \cite{CastravetMDS} for many examples).\ However, the Kawamata blowup $\varphi \colon Y \rightarrow X$ of a $\mathbb{Q}$-Fano 3-fold centred at a linear cyclic quotient singularity is at worst a weak Fano 3-fold with $\mathbb{Q}$-factorial terminal singularities and $[-K_Y] \in \Int(\overline{\Mov}(Y))$.\ 
In this case, there are at least two contractions from $Y$.\ One is a Mori contraction $\varphi \colon Y \rightarrow X$ and the other one is the small contraction associated to the linear system $|-K_Y|$.\ We prove that the small $\mathbb{Q}$-factorial modification associated to the latter is an isomorphism. %
%
The only small $\mathbb{Q}$-factorial modifications which are not isomorphisms are flips.\ This allows us to always remain in the Mori category since the discrepancies increase (see \cite[Lemma~3.38]{KollarMori}).\

\paragraph{Biratonal links and Toric Varieties.\ } Let $T$ be a rank 2 toric variety (up to isomorphism) for which the toric blowup $ \Phi \colon T \rightarrow \mathbb{P}$ restricts to the unique Kawamata blowup $Y \rightarrow X$ centred at $\mathbf{p_s} \in X$.\ 
Then, $\Cl(T)= \mathbb{Z}[\Phi^*H]+\mathbb{Z}[E]$, where $H$ is the generator of the Class Group of $\mathbb{P}$ and $E = \Phi^{-1}(\mathbf{p_s})$ is the exceptional divisor.\ Notice that $\mathbf{p_s}$ is not in the support of $H$.\
Then, the Cox ring of $T$ is
\begin{equation*}
\Cox(T) = \bigoplus_{(m_1,m_2) \in \mathbb{Z}^2} H^0(T,m_1\Phi^*H+m_2E) \; .
\end{equation*}

Since $T$ is toric, the Cox ring of $T$ is isomorphic to a (bi)-graded polynomial ring, in this case,
\begin{equation*}
\Cox(T) \simeq \mathbb{C}[t,\xi,x_1,x_2,y_1\ldots,y_4,s] \; 
\end{equation*}
where the grading comes from the $\mathbb{C}^* \times \mathbb{C}^*$-action defining $\mathbb{P}$ and $\Phi$, respectively. We denote by $\mathbb{R}_+\Big[\Big(\begin{smallmatrix}
  \omega_1 \\
  \omega_2 
\end{smallmatrix}\Big)\Big]$ the ray generated by a divisor $D$ in the linear system $\Big|\mathcal{O}_T\Big(\begin{smallmatrix}
  \omega_1 \\
  \omega_2 
\end{smallmatrix}\Big)\Big|$.\ Over $N^1(T)_{\mathbb{R}} \simeq \mathbb{R}^2$, we can depict the rays generated by the sections of Lemma \ref{lem:lift} as in Figure \ref{fig:chambdecomplin}.\
The movable and effective cone of $T$ in $N^1(T)_{\mathbb{R}}$ are 
\begin{equation*}
\Mov(T)=\mathbb{R}_+[M_1]+\mathbb{R}_+[M_2] \subset \Eff(T)= \mathbb{R}_+[E]+\mathbb{R}_+[E'] \; .
\end{equation*}
Notice that since $T$ is a normal simplicial toric variety, it follows from \cite[Theorem~15.1.8 and Theorem 15.1.10]{Cox}, respectively, that both $\Eff(T)$ and $\Mov(T)$ are closed in the Euclidean topology. According to Lemma \ref{lem:lift}, notice that the rays $E,\, M_1$ and $-K_Y$ cannot coincide.\ On the other hand, it can happen that some of the other rays do coincide.\

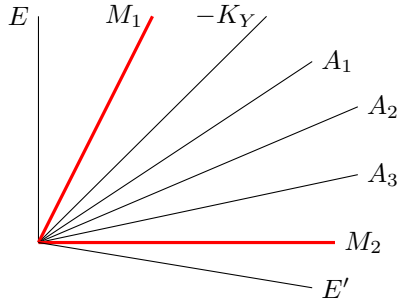
\begin{figure}[h]
\centering
\begin{tikzpicture}[scale=3]
  \coordinate (A) at (0, 0);
  \coordinate [label={left:$E$}] (E) at (0, 1);
  \coordinate [label={left:$-K_Y$}] (K) at (1, 1);

\coordinate [label={left:$M_1$}] (5) at (0.5,1);
\coordinate [label={right:$A_1$}] (A1) at (1.2,0.8);
\coordinate [label={right:$A_2$}] (A2) at (1.4,0.6);
\coordinate [label={right:$A_3$}] (A3) at (1.4,0.3);

\coordinate [label={right:$M_2$}] (M2) at (1.3,0);
\coordinate [label={right:$E'$}] (E') at (1.2,-0.2);

    \draw  (A) -- (E);
    \draw  (A) -- (K);
	\draw [very thick,color=red] (A) -- (5);
	\draw (A) -- (A1);
	\draw (A) -- (A2);
	\draw (A) -- (A3);

    \draw [very thick,color=red] (A) -- (M2);
	\draw (A) -- (E');
\end{tikzpicture}
\caption{A representation of the Mori chamber decomposition of $T$.\ The outermost rays generate the cone of pseudo-effective divisors of $T$ and in red it is represented the subcone of movable divisors of $T$.}%
\label{fig:chambdecomplin}%
\end{figure}

We run several 2-ray games on $T$.\
We divide the construction of the elementary Sarkisov links in two main cases, depending roughly on the Mori chamber decomposition of $T$ in Figure \ref{fig:chambdecomplin}, namely the behaviour of its movable cone of divisors near the boundary of the effective cone of divisors of $T$.\ We consider three cases:
\begin{enumerate}
    \item \label{item: fibr} \textbf{Fibration}: the class of $M_2$ is linearly equivalent to a rational multiple of $E'$.\
    \item \label{item: div contr} \textbf{Divisorial Contraction}: the class of $M_2$ is not linearly equivalent to a rational multiple of $E'$.\ Moreover, we contract the divisor $E'$:
    \begin{enumerate}
        \item \label{item: contr to point} \textbf{to a point}: no generator class of the Cox ring of $T$ is linearly equivalent to a rational multiple of $M_2$, or 
        \item \label{item: contr to rat curve} \textbf{to a rational curve}: there is one (and only one) generator class of the Cox ring of $T$ which is linearly equivalent to a rational multiple of $M_2$.\
    \end{enumerate}
\end{enumerate}

The diagram below sets the notation used in the rest of the paper.\ Note that we only consider Fano 3-folds not in Table \ref{tab:dimlinsys}.\ In each case we have a birational link at the level of toric varieties
\begin{equation*}
\begin{tikzcd} 
& T \arrow[swap]{dl}{\Phi} \arrow[swap]{dr}{\alpha_0} \arrow[dashed]{rr}{\tau_0} & &  T_1\arrow[swap]{dr}{\alpha_1} \arrow{dl}{\beta_0} \arrow[dashed]{r}{\tau_1} & \cdots \arrow[dashed]{r}{\tau_n}   & T'\arrow{dr}{\Phi'}\arrow{dl}{\beta_{n-1}}  & \\
 \mathbb{P} &  & \mathcal{F}_0 &  & \cdots  &  & \mathcal{F}'  
\end{tikzcd}
\end{equation*}
and $\dim \mathcal{F}' \leq \dim T'$, with equality if and only if we are in the second case.\ We restrict the diagram above to the Kawamata blowup $ \varphi \colon Y \subset T \rightarrow X \subset \mathbb{P}$ in order to obtain a birational link between 3-folds.\ 

It follows from \cite[Proposition~2.11]{mdsGIT}, that the birational contractions of a Mori Dream Space are induced from Toric Geometry.\ 
By \cite[Proposition~1.11]{mdsGIT}, one can always carry out a classical Mori Program for any divisor on a Mori Dream space $Y$.\ In particular, when $\rho(Y)=2$ it is called a \textit{2-ray game}.\ We refer the reader to \cite{Corti2ray} for the precise definition of 2-ray game.\ 

The next three lemmas describe the nature of the restriction of the maps $\tau,\tau_i$ to the birational link relative to $X \subset \mathbb{P}$.\

\begin{Lem} \label{lem:iso}
The map $\tau_0$ restricts to an isomorphism on $Y$.\
\end{Lem}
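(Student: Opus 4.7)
The plan is to identify the flipping curve of the toric small modification $\tau_0$ on $T$ and to show, via the equations of $Y$, that $Y$ is disjoint from it, whence $\tau_0$ restricts to an isomorphism on $Y$.

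First, I identify the wall being crossed. By Lemma \ref{lem:lift}, among the Cox generators of $T$ the only ones whose class lies strictly on the $E$-side of the ray $-K_Y$ are $t$ (on $E$) and $s$ (on $M_1$); the generators $\xi$ and $x_1$ lie on the ray $-K_Y$ itself, while $x_2, y_1, \ldots, y_4$ all have class strictly clockwise of $-K_Y$. Hence the nef chamber of $T$ inside $\Mov(T)$ is $[M_1,-K_Y]$ and $\tau_0$ is the small $\mathbb{Q}$-factorial modification crossing the wall $-K_Y$.

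Next, I describe the flipping locus. Because $\xi$ and $x_1$ are the only Cox generators with class on this wall, the unique torus-invariant curve in $T$ with class dual to $-K_Y$ is the weighted projective line
\begin{equation*}
C \;:=\; V(t, s, x_2, y_1, y_2, y_3, y_4) \;\cong\; \mathbb{P}(2, a_1) \;\subset\; T,
\end{equation*}
parameterized by $(\xi, x_1)$. The indeterminacy locus of $\tau_0$ in $T$ is contained in $C$, so it suffices to show $C \cap Y = \emptyset$.

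On $C$ the five Pfaffian equations of $Y$ vanish trivially, since in Tom format every Pfaffian either lies in $I_D = \langle y_1,\ldots,y_4\rangle$ or contains a factor in $I_D$ besides contributions from $x_2$. The four unprojection equations $sy_i = g_i$ reduce on $C$ to $g_i(\xi,x_1,0,\ldots,0)=0$, which by Lemma \ref{lem:RHS unproj} becomes $f_i(\xi,x_1)=0$ for three values of $i$ (and the tautology $0=0$ for the fourth). By Lemma \ref{lem: f_i alg independent} these three polynomials are algebraically independent in $\mathbb{C}[\xi,x_1]$, so their common zero locus in $\mathbb{A}^2_{\xi,x_1}$ is just the origin, which does not correspond to a point of $\mathbb{P}(2,a_1)$. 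Therefore $C\cap Y = \emptyset$ and $\tau_0$ restricts to an isomorphism of $Y$ onto its image in $T_1$.

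The main obstacle is the combinatorial identification of the flipping locus as the single weighted projective line $\mathbb{P}(2,a_1)$ in the Cox-theoretic picture; once this is in place, the algebraic-independence input from Lemma \ref{lem: f_i alg independent} concludes the argument cleanly.
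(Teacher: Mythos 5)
Your overall strategy --- locate the locus on which $\tau_0$ fails to be an isomorphism and show, via the unprojection equations and Lemma \ref{lem: f_i alg independent}, that $Y$ misses it --- is exactly the paper's, and your wall identification (the ray through $\deg\xi=\deg x_1$, i.e.\ $-K_Y$) is correct. However, your description of the contracted locus is wrong. The irrelevant ideal of $T$ is $(t,s)\cap(\xi,x_1,z_1,\dots,z_5)$, so $V(t,s)$ is part of the unstable locus and your curve $C=V(t,s,x_2,y_1,\dots,y_4)$ is \emph{empty} in $T$; in particular the claim that the indeterminacy locus of $\tau_0$ is contained in $C$ is false, since that locus is nonempty (otherwise $\tau_0$ would be an isomorphism of toric varieties, which it is not). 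The correct contracted locus of the contraction $\alpha_0\colon T\to\mathcal{F}_0$ associated to the wall is the \emph{surface} $V(x_2,y_1,\dots,y_4)$, i.e.\ the locus where all Cox generators other than $t,s,\xi,x_1$ vanish: every monomial of degree on the ray $\mathbb{R}_+\bigl[\deg\xi\bigr]$ is either pure in $\xi,x_1$ or divisible by one of $t,s$ \emph{and} one of the $z_i$, so on $\{z_1=\cdots=z_5=0\}$ the map $\alpha_0$ forgets $(t,s)$ and collapses the $(t,s)$-directions onto the curve $\mathbb{P}(2,a_1)\subset\mathcal{F}_0$. You have in effect confused the image of the contracted locus in $\mathcal{F}_0$ (which is indeed $\mathbb{P}(2,a_1)$) with the contracted locus in $T$ itself.

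The gap is fortunately repairable by a one-line change, because your final computation is the right one for the correct locus: on $\{x_2=y_1=\cdots=y_4=0\}$ the Pfaffians vanish identically (every monomial of a Tom-format Pfaffian has a factor in $I_D$), each $sy_i$ vanishes, and each $h_i$ vanishes, so the nine equations reduce to $f_i(\xi,x_1)=0$ for $i=1,2,3$ --- with $t$ and $s$ not appearing at all. Since points of the contracted surface satisfy $(\xi,x_1)\neq(0,0)$ by the irrelevant ideal, Lemma \ref{lem: f_i alg independent} gives $Y\cap V(x_2,y_1,\dots,y_4)=\emptyset$, which is precisely the paper's argument. As written, though, your proof establishes the emptiness of an intersection with an empty set and does not address the actual indeterminacy locus, so it does not yet prove the lemma.
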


\begin{proof}
In this proof, call $z_i$ all variables that are not $t,s,\xi,x_1$.\ 
The small modification $\tau_0 \colon T \dashrightarrow T_1$ can be decomposed as

\begin{equation*}
    \xymatrix{
    T \ar@{-->}[rr]^\tau \ar[dr]_{\alpha_0} && T_1 \ar[dl]^{\beta_0} \\
    & \mathcal{F}_0 &
    }
\end{equation*}
where $\mathcal{F}_0 \coloneqq \Proj \bigoplus_{m\geq 1} H^0(T,m\mathcal{O}_T\big(\begin{smallmatrix}
  2 \\
  1 
\end{smallmatrix}\big))$, in coordinates the map $\alpha_0$ is,
\begin{align*}
\alpha_0 \colon T &\rightarrow \mathcal{F}_0 \\
(t,s,\xi,x_1, \ldots z_i \ldots) &\mapsto (\xi,x_1, \ldots u_j \ldots)  
\end{align*}
and $u_j$ is some monomial which is a multiple of $z_i$ and of either $t$ or $s$.\ 

Notice that the irrelevant ideal of $T$ is $(t,s) \cap (\xi, x_1, \ldots z_i, \ldots)$.\ Hence, $\alpha_0$ contracts the locus $(z_i = 0) \subset T$.\ This is indeed a small contraction since the ray $\mathbb{R}_+\Big[\Big(\begin{smallmatrix}
  2 \\
  1 
\end{smallmatrix}\Big)\Big]$ is in the interior of $\Mov(T)$.\ Now we restrict this small contraction to $Y$.\ 
By Lemma \ref{lem:lift}, we know that $z_i$ are the sections in $H^0\Big(Y,-\frac{a_i}{2}K_Y-n_iE\Big)$ with $n_i > 0$.\ On the other hand, by Lemma \ref{lem:eqnsY}, the Pfaffian equations $\Pf_j$ must vanish identically and the remaining equations are $f_j(\xi,x_1) = 0$ for $1 \leq j \leq 3$.\ 
However, this is empty by Lemma \ref{lem: f_i alg independent}.\ 
\end{proof}

\begin{Rem}
It is interesting to observe that the behaviour of the restriction of $\tau_0$ as in Lemma \ref{lem:iso} is not a feature of the codimension of $X$ but rather of its Fano index.\ When the index is 1, the map $\tau$ restricts to a number of simultaneous Atyiah flops (see \cite[Theorem 4.1]{CampoSarkisov} and \cite{okadaI}).\ On the other hand, for higher indices it is an isomorphism (cf \cite[Theorem 2.5.6]{GuerreiroThesis}).\
\end{Rem}

\begin{Lem} \label{lem:wall2}
Suppose that the map $\tau_i \colon T_i \dashrightarrow T_{i+1}$ restricts to a small $\mathbb{Q}$-factorial modification over a point which is not an isomorphism.\ Then, it restricts to a flip.\
\end{Lem}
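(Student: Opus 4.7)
The plan is to show that a curve $C \subset Y_i$ contracted by the small contraction $\alpha_i|_{Y_i}$ satisfies $K_{Y_i}\cdot C<0$, which is the characterisation of $\tau_i|_{Y_i}$ being a flip rather than a flop or an antiflip. Since the modification is assumed to be over a point and small, such a $C$ exists and is the entire exceptional locus up to irreducible components.

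First I would use that a small $\mathbb{Q}$-factorial modification preserves the N\'eron--Severi space and the class of the canonical divisor: $N^1(Y_i)_{\mathbb{R}}\cong N^1(Y)_{\mathbb{R}}$ with $[K_{Y_i}]=[K_Y]$, and $\overline{\Mov}(Y_i)=\overline{\Mov}(Y)$. The restricted 2-ray game then traverses the Mori chamber decomposition of $\overline{\Mov}(Y)$ in sequence starting at $\overline{\Nef}(Y)$. The wall $W$ separating $\overline{\Nef}(Y_i)$ from $\overline{\Nef}(Y_{i+1})$ is the ray of classes orthogonal to $C$; classes strictly on the $\overline{\Nef}(Y_i)$ side pair positively with $C$, while classes strictly on the $\overline{\Nef}(Y_{i+1})$ side pair negatively. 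Since $Y$ is weak Fano, $[-K_Y]\in\overline{\Nef}(Y)$, and all the chambers $\overline{\Nef}(Y),\overline{\Nef}(Y_1),\dots,\overline{\Nef}(Y_i)$ lie on the same side of $W$ in the 2-dimensional cone $\overline{\Mov}(Y)$. Therefore $-K_{Y_i}\cdot C\geq 0$.

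The main obstacle is upgrading this to the strict inequality, i.e.\ ruling out the equality case $-K_{Y_i}\cdot C=0$, which would realise $\tau_i|_{Y_i}$ as a flop instead of a flip. I would address this with two complementary arguments. On one side, Lemma \ref{lem:lift} yields the explicit expression $[-K_Y]=\iota_X[\varphi^*H]-\tfrac{1}{a_s}[E]$ in $N^1(Y)$, whereas the wall $W$ is a specific ray determined by the Cox ring weights of $T$ at the GIT boundary being crossed; a direct comparison in each case shows that the ray of $[-K_Y]$ is distinct from $W$. On the other side, by \cite[Lemma 3.38]{KollarMori}, flips strictly increase discrepancies while flops preserve them, so the 2-ray game carried out as a $K_Y$-MMP on the weak Fano $Y$ can only admit flips among its non-trivial small modifications, as a flop would leave the chamber containing $[-K_Y]$ unchanged and stall the MMP.

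Combining these, $-K_{Y_i}\cdot C>0$, whence $K_{Y_i}\cdot C<0$, and we conclude that $\tau_i|_{Y_i}$ is a flip.
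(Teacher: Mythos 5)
Your strategy --- locating $[-K_Y]$ relative to the wall being crossed in the chamber decomposition of $\overline{\Mov}(Y)$ --- is a genuinely different route from the paper's. The paper proves $K_{Y_i}\cdot C_i<0$ directly: by Lemma \ref{lem:eqnsY} an anticanonical divisor on $Y_i$ meets the contracted curve transversely, so $-K_{Y_i}\cdot C_i>0$; it then proves $K_{Y_{i+1}}\cdot C_{i+1}>0$ separately, by writing $-K_{Y_{i+1}}$ as the combination $m_2A_i-n_2L$ of the wall class $A_i$ (trivial on $C_{i+1}$) and a class $L$ beyond the wall (positive on $C_{i+1}$). Your first step, giving $-K_{Y_i}\cdot C\ge 0$ from $[-K_Y]\in\overline{\Nef}(Y)$ and the ordering of the chambers, is sound and more conceptual than the paper's transversality claim.

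There are, however, gaps to repair. Your argument (b) is not valid: the 2-ray game is a Mori program for a divisor other than $K_Y$ (it deliberately continues past the $K$-trivial wall), so it is not a $K_Y$-MMP and a flop would not ``stall'' it; \cite[Lemma 3.38]{KollarMori} only guarantees that flips keep $Y_i$ terminal, it cannot be used to exclude flops a priori. The whole burden therefore falls on argument (a), which you leave as ``a direct comparison in each case.'' This should be made uniform, and it can be, using exactly the lemma you cite: by Lemma \ref{lem:lift} every Cox generator other than $t,s,\xi,x_1$ has class $-m_1K_Y-m_2E$ with $m_2>0$, so every wall after the first is spanned by a class not proportional to $[-K_Y]$; the first wall is the ray of $[-K_Y]$ itself (spanned by $\deg\xi$), but its crossing restricts to an isomorphism on $Y$ by Lemma \ref{lem:iso} and is excluded by the hypothesis. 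This non-proportionality is precisely the condition $n_2>0$ the paper imposes on $A_i$. Finally, you only verify $K$-negativity on the contracted side; to conclude ``flip'' one also needs $K_{Y_{i+1}}$ positive on the extracted curves. In the $\rho=2$ setting this does follow, since $[C_i]$ and $[C_{i+1}]$ are proportional functionals on $N^1\cong\mathbb{R}^2$ (both vanish on $A_i$) with opposite signs (ample classes on $Y_{i+1}$ are negative on $C_i$), but this is the content of the entire second half of the paper's proof and cannot simply be folded into the word ``characterisation.''
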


\begin{proof} 
By assumption there are curves $C_i \subset Y_i$ and $C_{i+1} \subset Y_{i+1}$ such that the diagram

\begin{equation*}
    \xymatrix{
    C_i \subset Y_i \ar@{-->}[rr]^{\tau_i} \ar[dr]_{\alpha_i} && Y_{i+1} \supset C_{i+1} \ar[dl]^{\beta_i} \\
    & \mathcal{F}_i &
    }
\end{equation*}
is a small $\mathbb{Q}$-factorial modification.\ Clearly, we have $K_{Y_i}\cdot C_i < 0$.\ Indeed, by Lemma \ref{lem:eqnsY}, $C_i$ intersects $-K_{Y_i}$ transversely since $C_i \in \mathrm{Bs}|-m_i K_{Y_i}-n_iE|$ for some $m_i,\,n_i > 0$; then, the claim follows.\ On the other hand, there exists a divisor class
\begin{equation*}
L \sim -m_1K_Y-m_2E \, \in \, \mathbb{R}_+[A_i] + \mathbb{R}_+[E']
\end{equation*}
for which $L \cdot C_{i+1}>0$, 
where $A_i$ and $E'$ are as in Figure \ref{fig:chambdecomplin} (implying, in particular, $m_i>0$).\ Moreover,
\begin{equation*}
A_i \sim -n_1K_Y-n_2E,\,\, n_i >0
\end{equation*}
and $A_i \cdot C_{i+1} = 0$.\ Then, $m_2A_i-n_2L \sim (m_2n_1-m_1n_2)(-K_{Y_{i+1}})$.\ 
Notice that $m_2n_1-m_1n_2=0$ if and only if $A_i \sim mL$ for some non-zero $m \in \mathbb Q$, which is not possible since they have different intersections with $C_{i+1}$.\ Additionally, $m_2n_1-m_1n_2>0$ since $L$ is in the cone $\mathbb{R}_+[A_i]+\mathbb{R}_+[E']$ but is not linearly equivalent to any non-zero rational multiple of $A_i$.\ In particular,
\begin{equation*}
-K_{Y_{i+1}} \cdot C_{i+1} = \frac{1}{m_2n_1-m_1n_2}(m_2A_i\cdot C_{i+1}-n_2L\cdot C_{i+1})  <0
\end{equation*}
since $A_i$ and $L$ were chosen so that $A_i\cdot C_{i+1} = 0$ and $L\cdot C_{i+1}>0$.\
\end{proof}

\begin{Lem}[{\cite[Lemma~2.5.7]{GuerreiroThesis}}] \label{lem:discr}
Let $\sigma \colon X \dashrightarrow X'$ be an elementary birational link between $\mathbb{Q}$-Fano 3-folds initiated by a divisorial extraction $\varphi \colon E \subset Y \rightarrow X$.\ Then, there is a birational map $\Psi \colon Y \dashrightarrow X'$ which is the composition of small $\mathbb{Q}$-factorial modifications followed by a divisorial contraction $\varphi' \colon E' \subset Y' \rightarrow X'$ with discrepancy
\begin{equation*}
a= \frac{m_2}{m_1n_2-m_2n_1}
\end{equation*}
where $m_i$ and $n_i$ are positive rational numbers such that $\varphi'^*(-K_{X'}) \sim -m_1K_Y-m_2E$ and $E' \sim -n_1K_Y-n_2E$.\
\end{Lem}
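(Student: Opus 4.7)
The plan is to chase the canonical class through the birational link and then solve a small linear system. First, I will record that each intermediate map $\tau_i$ in the link is a small $\mathbb{Q}$-factorial modification (by Lemma \ref{lem:wall2}, or an isomorphism by Lemma \ref{lem:iso}). Being isomorphisms in codimension one, these maps canonically identify divisor class groups and preserve the canonical class. Composing them, we obtain a natural identification $\Pic(Y')\otimes\mathbb{Q}\cong\Pic(Y)\otimes\mathbb{Q}$ under which $\{-K_Y, E\}$ transports to a basis (with $E$ identified with its iterated strict transform), and $-K_{Y'}\sim -K_Y$ as classes.

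Next, I apply the defining property of the discrepancy: $K_{Y'} = \varphi'^{*}(K_{X'}) + aE'$, equivalently
\[
-K_Y \;\sim\; -K_{Y'} \;=\; \varphi'^{*}(-K_{X'}) - aE'.
\]
Substituting the hypotheses $\varphi'^{*}(-K_{X'})\sim -m_1K_Y - m_2 E$ and $E'\sim -n_1 K_Y - n_2 E$ into this identity gives an equation in $\Pic(Y')\otimes\mathbb{Q}$ with unknown $a$. Equivalently, this is a $2\times 2$ change-of-basis problem: expressing $-K_Y$ in the basis $\{\varphi'^{*}(-K_{X'}), E'\}$. The transition matrix is
\[
\begin{pmatrix} m_1 & n_1 \\ -m_2 & -n_2 \end{pmatrix},
\]
whose determinant is $-(n_2m_1 - n_1m_2)$. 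This determinant is non-zero because the classes $\varphi'^{*}(-K_{X'})$ and $E'$ span distinct rays in $\Pic(Y')\otimes\mathbb{R}$: the first lies in the interior of $\Nef(Y')$ while the second spans an extremal ray of $\Eff(Y')$ (indeed, $\varphi'$ is the contraction of the ray spanned by $E'$). Applying Cramer's rule to recover the coefficient of $E'$ in the expansion of $-K_Y$ immediately yields
\[
a = \frac{m_2}{n_2 m_1 - n_1 m_2}.
\]

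The main bookkeeping is ensuring the identifications of class groups across the sequence of small modifications, and in particular that $-K_{Y'}$ really does correspond to $-K_Y$ under the strict-transform isomorphism; this is standard for maps that are isomorphisms in codimension one, but deserves to be spelled out in the 2-ray game setup of Section \ref{sec:links and mds}. Once that step is in place, the remainder is a one-line piece of linear algebra, and the stated formula is precisely the corresponding entry of the inverse transition matrix.
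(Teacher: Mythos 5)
The paper does not prove this lemma itself --- it is imported verbatim from \cite[Lemma~2.5.7]{GuerreiroThesis} --- so there is no in-paper proof to compare against; your argument is correct and is the standard derivation. Transporting $-K_Y$ and $E$ across the small modifications, writing $-K_{Y'}=\varphi'^{*}(-K_{X'})-aE'$, and solving the resulting $2\times 2$ system does yield $a=m_2/(n_2m_1-n_1m_2)$ (the coefficient of $E'$ being $-m_2/(n_2m_1-n_1m_2)$ by Cramer's rule, with the determinant nonzero exactly for the reason you give), so nothing is missing.
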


In practice, to successfully run this game we need to guarantee that each step $\tau_i \colon T_i \dashrightarrow T_{i+1}$ of the birational link contracts finitely many curves, with the exception of $\tau_i$ an isomorphism on $Y_i \subset T_i$.\ However, in each case it is possible to retrieve explicitly the loci contracted and extracted by the maps $\tau_i$.\

In the rest of this section we will present several Tables containing information about the links for each family examined.\ We specify the Type I centre whose blowup initiates the link in cases where there is more than one Type I centre; for instance, we write "\#39993 $1/5$" instead of just "\#39993" if the link starts with the Kawamata blowup of a $1/5$ singularity.\

\subsection{Case I: Fibrations}

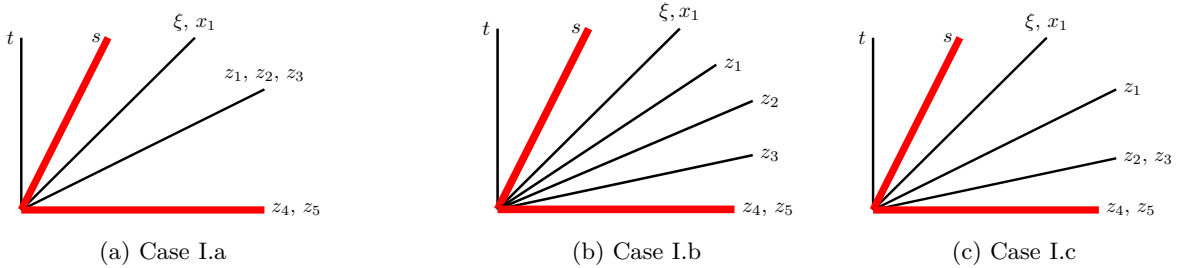
\begin{figure*}[h!]
        \centering
        \begin{subfigure}[b]{0.3\textwidth}
            \centering
            \resizebox{\linewidth}{!}{
\begin{tikzpicture}[every node/.style={scale=0.333}]
  \coordinate (A) at (0, 0);
  \coordinate [label={left:$t$}] (E) at (0, 1);
  \coordinate [label={above:$\xi,\,x_1$}] (K) at (1, 1);

\coordinate [label={left:$s$}] (5) at (0.5,1);

\coordinate [label={above:$z_1,\,z_2,\,z_3$}] (A2) at (1.4,0.7);
\coordinate [label={right:$z_4,\,z_5$}] (A3) at (1.4,0);

  \draw  (A) -- (E);
  \draw  (A) -- (K);
	\draw [very thick,color=red] (A) -- (5);
	\draw (A) -- (A2);
	\draw [very thick,color=red] (A) -- (A3);

\end{tikzpicture}
}
            \caption[Case I.a]
            {{\small Case I.a}}    
            \label{fig:Ia}
        \end{subfigure}
        \hfill
        \begin{subfigure}[b]{0.3\textwidth}
            \centering 
\resizebox{\linewidth}{!}{
\begin{tikzpicture}[every node/.style={scale=0.333}]
  \coordinate (A) at (0, 0);
  \coordinate [label={left:$t$}] (E) at (0, 1);
  \coordinate [label={above:$\xi,x_1$}] (K) at (1, 1);

\coordinate [label={left:$s$}] (5) at (0.5,1);
\coordinate [label={right:$z_1$}] (A1) at (1.2,0.8);
\coordinate [label={right:$z_2$}] (A2) at (1.4,0.6);
\coordinate [label={right:$z_3$}] (A3) at (1.4,0.3);
\coordinate [label={right:$z_4,\,z_5$}] (M2) at (1.3,0);

  \draw  (A) -- (E);
  \draw  (A) -- (K);
	\draw [very thick,color=red] (A) -- (5);
	\draw (A) -- (A1);
	\draw (A) -- (A2);
	\draw  (A) -- (A3);
    \draw [very thick,color=red](A) -- (M2);

\end{tikzpicture}
    }    
            \caption[]
            {{\small Case I.b}}    
            \label{fig:Ib}
        \end{subfigure}
        \begin{subfigure}[b]{0.3\textwidth}   
            \centering 
            \resizebox{\linewidth}{!}{
\begin{tikzpicture}[every node/.style={scale=0.333}]
  \coordinate (A) at (0, 0);
  \coordinate [label={left:$t$}] (E) at (0, 1);
  \coordinate [label={above:$\xi,\,x_1$}] (K) at (1, 1);

\coordinate [label={left:$s$}] (5) at (0.5,1);
\coordinate [label={right:$z_1$}] (A2) at (1.4,0.7);
\coordinate [label={right:$z_2,\,z_3$}] (A3) at (1.4,0.3);
\coordinate [label={right:$z_4,\,z_5$}] (M2) at (1.3,0);

  \draw  (A) -- (E);
  \draw  (A) -- (K);
	\draw [very thick,color=red] (A) -- (5);
	\draw (A) -- (A2);
	\draw (A) -- (A3);
    \draw [very thick,color=red] (A) -- (M2);
\end{tikzpicture}
}
            \caption[]
            {{\small Case I.c}}    
            \label{fig:Ic}
        \end{subfigure}
        \hfill
        \caption{\small The possible effective cones of $T$ ending with a fibration to a rational curve $B' = \Proj \mathbb{C}[z_4,z_5]$.\ The variables $z_1,\ldots,z_5$ are $y_1,\ldots,y_4,x_2$ up to permutation.}
        \label{fig:fibr}
    \end{figure*}

In each case in Figure \ref{fig:fibr}, whose notation we refer to, the movable cone of $T$ is not strictly contained in the effective cone of $T$.\ Indeed, the rays generated by $z_4$ and $z_5$ are both in $\partial \Mov(T)$ and $\partial \Eff(T)$.\ Hence we have a diagram of toric varieties
\begin{equation*}
\begin{tikzcd} 
T\arrow[swap]{dd}{\Phi} \arrow[dashed]{rr}{\tau}& &  T'\arrow{dd}{\Phi'}\\
&&\\
 \mathbb{P} \arrow[swap,dashed]{rr}{\sigma} &  &\mathcal{F}'  
\end{tikzcd}
\end{equation*}
where $\Phi$ is a divisorial contraction, $\Phi'$ is a fibration into $\mathcal{F}'\simeq \Proj\mathbb{C}[z_4,z_5] \simeq \mathbb{P}^1$.\ The map $\tau$ is a small $\mathbb{Q}$-factorial modification.\  
We restrict $\Phi \colon T \rightarrow \mathbb{P}$ to be the unique Kawamata blowup $\varphi \colon E \subset Y \rightarrow \mathbf{p_s} \in X$.\ 
By Lemmas \ref{lem:iso} and \ref{lem:wall2}, the map $\tau|_Y \colon Y \dashrightarrow Y' $ is an isomorphism followed by a finite sequence of isomorphisms or flips.\ 
Referring to the notation in Figure \ref{fig:pliconj}, by assumption the rays $\mathbb{R}_+[z_4]$ and $\mathbb{R}_+[z_5]$ are given by linearly dependent vectors $v_4$ and $v_5$ in $\mathbb{Z}^2$.\ Let $B$ be the matrix whose columns are the vectors $v_4$ and $v_5$.\ Then, there is $A \in \GL(2,\mathbb{Z})$ such that 
\begin{equation*}
A\cdot B = \begin{pmatrix}
a & b \\
0 & 0 
\end{pmatrix} \; .
\end{equation*}
Then, performing a row operation on the grading of $T'$ via the matrix $A$, $T'$ is isomorphic to a toric variety with $\mathbb{C}^* \times \mathbb{C}^*$-action given by
\begin{equation*}
\begin{array}{ccccccccccccc}
             &       & t  & s &   \xi & x_1 & z_1 & z_2 & z_3 & z_4 & z_5 \\
\actL{T'}   &  \lBr &  \kappa_0 & \kappa_1 &   \kappa_2 & \kappa_3 & \kappa_4 & \kappa_5 & \kappa_6 & a & b &   \actR{}\\
             &       & \lambda_0 & \lambda_1 &  \lambda_2 & \lambda_3 & \lambda_4 & \lambda_5 & \lambda_6 & 0 & 0 &  
\end{array}
\end{equation*}

where the column vectors are ordered clockwise.\ The irrelevant ideal of $T'$ is $(t , s,   \xi, x_1, z_1, z_2, z_3) \cap (z_4, z_5)$.\ The map $\Phi'$ can be realised as 
\begin{align*}
\Phi' \colon T' &\longrightarrow \mathcal{F}' \\
(t , s,  \xi, x_1, z_1, z_2, z_3, z_4, z_5) & \longmapsto (z_4,z_5)
\end{align*}
and the fibre of $\Phi'$ over each point is isomorphic to $\mathbb{P}(\lambda_0,\ldots,\lambda_6)$.\ The following Table \ref{tab:fibrations} shows what the restrictions of $\Phi'$ to $Y'$ are.\

\begin{table}[ht!]
    \centering
    \begin{tabular}[t]{l c c c}
        \toprule
        ID   & Centre & $T'$             &  $dP_k/\mathcal{F}' $     \\ \midrule

\#39993 & 1/5   & $\begin{pmatrix}
t & s & \xi & x_1 &y_1 & y_2 & y_3 & x_2 & y_4\\
0 & 5 & 2 & 4 & 3 & 3 & 3 & 1 & 2\\
1 & 3 & 1 & 2 & 1 & 1 & 1 & 0 & 0
\end{pmatrix}$&   $dP_4/\mathbb{P}(1,2)$   \\

\#39991 & 1/7   & $\begin{pmatrix}
t & s & \xi & x_1 &y_2 & y_3 & x_2 & y_1 & y_4\\
0 & 7 & 2 & 4 & 3 & 3 & 3 & 1 & 2\\
1 & 4 & 1 & 2 & 1 & 1 & 1 & 0 & 0
\end{pmatrix}$&   $dP_4/\mathbb{P}(1,2)$   \\

\#39970 & 1/5   & $\begin{pmatrix}
t & s & \xi & x_1 &y_3 & y_2 & y_1 & x_2 & y_4\\
0 & 5 & 2 & 4 & 5 & 3 & 3 & 1 & 2\\
1 & 3 & 1 & 2 & 2 & 1 & 1 & 0 & 0
\end{pmatrix}$&   $dP_3/\mathbb{P}(1,2)$   \\
        
\#39969 & 1/7   & $\begin{pmatrix}
t & s & \xi & x_1 &y_3 & x_2 & y_2 & y_1 & y_4\\
0 & 7 & 2 & 4 & 5 & 3 & 3 & 1 & 2\\
1 & 4 & 1 & 2 & 2 & 1 & 1 & 0 & 0
\end{pmatrix}$&     $dP_3/\mathbb{P}(1,2)$  \\

\#39968  & 1/11  & $\begin{pmatrix}
t & s & \xi & x_1 &y_3 & x_2 & y_2 & y_1 & y_4\\
0 & 11 & 2 & 8 & 5 & 3 & 3 & 1 & 2\\
1 & 6 & 1 & 4 & 2 & 1 & 1 & 0 & 0
\end{pmatrix}$&   $dP_3/\mathbb{P}(1,2)$   \\

\#39961 & 1/5   & $\begin{pmatrix}
t & s & \xi & x_1 &y_3 & y_2 & y_1 & x_2 & y_4\\
0 & 5 & 2 & 4 & 7 & 5 & 3 & 1 & 2\\
1 & 3 & 1 & 2 & 3 & 2 & 1 & 0 & 0
\end{pmatrix}$&   $dP_2/\mathbb{P}(1,2)$   \\

\#39607 & 1/5 (II$_2$)    & $\begin{pmatrix}
t & s_0 & \xi & y & s_1 & s_2 & v & u & z \\
1 & 3 & 1 & 2 & 3 & 3 & 2 & 1 & 1\\
3 & 4 & 1 & 2 & 3 & 2 & 1 & 0 & 0
\end{pmatrix}$&   $dP_1/\mathbb{P}^1$   \\

\#39578  & 1/7  & $\begin{pmatrix}
t & s & \xi & x_1 &y_3 & x_2 & y_2 & y_1 & y_4\\
1 & 4 & 1 & 2 & 3 & 2 & 2 & 1 & 2\\
3 & 5 & 1 & 2 & 2 & 1 & 1 & 0 & 0
\end{pmatrix}$&   $dP_2/\mathbb{P}(1,2)$    \\

\#39576  & 1/9  & $\begin{pmatrix}
t & s & \xi & x_1 &y_3 & x_2 & y_2 & y_1 & y_4\\
1 & 5 & 1 & 2 & 3 & 2 & 2 & 1 & 2\\
3 & 6 & 1 & 2 & 2 & 1 & 1 & 0 & 0
\end{pmatrix}$&   $dP_2/\mathbb{P}(1,2)$   \\
        \bottomrule
    \end{tabular}
\caption{Elementary birational links to a fibration (cases in Table \ref{tab:dimlinsys} excluded).\ The family \#39607 is embedded in the weighted projective space $\mathbb{P}^7(2,3,3,4,5,5,6,7)$ with coordinates $\xi,u,z,y,v,s_0,s_1,s_2$.}
\label{tab:fibrations}
\end{table} 

\begin{Ex} 
Let $X\subset \mathbb{P} \coloneqq \mathbb{P}(1,2,2,3,4,5,7,5)$ be a quasi-smooth member of the family \#39961, with 
$x_2, \xi, y_4, y_1, x_1, y_2, y_3, s$ the homogeneous variables of $\mathbb{P}$.\

We take the toric blowup $\Phi \colon T \rightarrow \mathbb{P}$ centred at $\mathbf{p_s}=(0: \cdots : 0 :1) \in \mathbb{P}$ and restrict it to the unique Kawamata blowup $\varphi \colon Y \rightarrow X$ centred at $\mathbf{p_s}$.\ 
The point $\mathbf{p_s}$ is a cyclic quotient singularity of type $\frac{1}{5}(1,2,3)$ and local analytical variables $\xi, x_1, x_2$ called the orbinates.\ 
By Lemma \ref{lem:lift}, in order to restrict $\Phi$ to the the Kawamata blowup of $X$ at $\mathbf{p_s}$ we need for $T$ to have a certain bi-grading: the one relative to \#39961 is listed in Table \ref{tab:fibrations}.\ In particular, 
\begin{equation*}
\Mov(T)=\langle \big(\begin{smallmatrix}
  7 \\
  4 
\end{smallmatrix}\big),\big(\begin{smallmatrix}
  1 \\
  0 
\end{smallmatrix}\big)  \rangle \subset 
\langle \big(\begin{smallmatrix}
  0 \\
  1 
\end{smallmatrix}\big),\big(\begin{smallmatrix}
  1 \\
  0 
\end{smallmatrix}\big)  \rangle = \Eff(T) \subset \mathbb{R}^2.\
\end{equation*}

Now we run the 2-ray game on $T$ following the movable cone in Case I.b of Figure \ref{fig:fibr} and then we restrict it to $Y$.\ After saturation with respect to the new variable $t$, the equations defining $Y:=\Phi^{-1}_* X \subset T$ are the following
\begin{equation*}
\begin{cases}
x_2 \xi y_1 - y_1^2 - y_4 x_1 +  x_2 y_2 =0\\
- x_2^7 t^3 -  x_2^3 \xi^2 t -  x_2 \xi^3 +  x_2^2 \xi y_1 t +  x_2 \xi x_1 -  x_2^2 y_2 t - y_1 x_1 + y_4 s =0\\
\xi^3 y_4 - y_4^4 t^3 +  x_2 \xi y_2 - y_1 y_2 +  x_2 y_3 =0\\
x_2^2 \xi^3 t +  x_2 y_4^2 y_1 t^3 +  x_2^2 \xi x_1 t +  x_2^2 y_4 x_1 t^2 - x_1^2 - y_1 s =0\\
x_2^6 y_1 t^3 -  x_2 y_4^4 t^4 + \xi^3 y_1 - y_4^3 y_1 t^3 -  x_2 y_4^2 x_1 t^2 +  x_2^2 y_3 t + x_1 y_2 =0\\
x_2^5 y_4 y_1 t^3 - y_4^5 t^4 - y_4^3 x_1 t^2 +  x_2 y_4 y_3 t + y_2^2 - y_1 y_3 =0\\
x_2^7 y_1 t^4 -  x_2^6 x_1 t^3 +  x_2^2 \xi^4 t -  x_2^2 y_4^4  t^5-  x_2 y_4^3 y_1 t^4 +  x_2^2 \xi y_4 x_1 t^2\\ -  x_2^2 y_4^2 x_1 t^3 - \xi^3 x_1 + y_4^3 x_1 t^3 -
     x_2 y_4 y_1 x_1 t^2 -  x_2 y_4^2 y_2 t^3 +  x_2^3 y_3 t^2 + y_2 s =0\\
 x_2^5 y_4 x_1 t^3 -  x_2^6 y_2 t^3 -  x_2 \xi y_4^4 t^4 + y_4^4 y_1 t^4 -  x_2 \xi y_4^2 x_1 t^2 \\+ y_4^2 y_1 x_1 t^2 - \xi^3 y_2 + y_4^3 y_2 t^3 +  x_2^2 \xi y_3 t -  x_2 y_1 y_3 t - x_1 y_3 =0\\
- x_2^6 \xi^3 t^3 +  x_2^6 y_4^3 t^6 -  x_2^6 \xi x_1 t^3 +  x_2^2 \xi^2 y_4^3 t^4 +  x_2^5 y_1 x_1 t^3 - \xi^6 + \xi^3 y_4^3 t^3 -  x_2 \xi y_4^3 y_1 t^4 \\+
x_2^2 \xi^2 y_4 x_1 t^2 - y_4^4 x_1 t^4 -  x_2 \xi y_4 y_1 x_1 t^2 -  x_2 \xi y_4^2 y_2 t^3 +  x_2 y_4^3 y_2 t^4 \\- y_4^2 x_1^2 t^2 + y_4^2 y_1 y_2 t^3 +  x_2 y_4 x_1 y_2 t^2 +
     x_2 x_1 y_3 t - y_3 s =0 
\end{cases}
\end{equation*}

These consist of the five maximal Pfaffians $\Pf_i=0$ together with the four unprojection equations $sy_j-g_j=0$.\ Also, $Y$ is inside the rank 2 toric variety $T$ whose irrelavant ideal is $(t,s) \cap (\xi,x_1,y_3,y_2,y_1,x_2,y_4)$.\ 
By Lemma \ref{lem:iso}, the map $\tau_0 \colon T \dashrightarrow T_1$ restricts to an isomorphism on $Y$ and therefore we can assume $Y\subset T_1$ where $T_1$ has irrelevant ideal $I_1=(t,s,\xi,x_1) \cap (y_3,y_2,y_1,x_2,y_4)$.  
Crossing the $y_3$-wall contracts the locus $\mathbb{V}(y_2,y_1,x_2,y_4) \subset T_1$.\ Its restriction to $Y_1$ is $\mathbb{V}(y_3,y_2,y_1,x_2,y_4)\subset \mathbb{V}(I_1)$, where $I_1$ is the ideal defining $Y_1 \subset T_1$.\ Hence, the contraction happens away from $Y_1$, so $\tau_1$ restricts to an isomorphism $Y_1 \cong Y_2$.\ 
Just as before we just set $Y_1 \cong Y_2 \subset T_2$ where $I_2=(t,s,\xi,x_1,y_3) \cap (y_2,y_1,x_2,y_4)$.\ Crossing the $y_2$-wall restricts to a contraction of $C_2 \coloneqq (y_1=x_2=y_4=0)|_{Y_2} \simeq \mathbb{P}(7,1)$
and an extraction of $C_3 \coloneqq (t=s=\xi=x_1=0)|_{Y_3} \simeq \mathbb{P}(1,5)$.\ 
Hence, the map $\tau_2$ corresponds to a toric flip over a point, denoted by $(-7,-1,1,5)$ and $Y_3 \subset T_3$ with irrelevant ideal $I_3=(t,s,\xi,x_1,y_3,y_2) \cap (y_1,x_2,y_4)$.\ The map $\tau_3$ restricts to 
\begin{equation*}
(x_2=y_4=0)|_{Y_3}= (y_1=x_2=y_4=0) \subset \mathbb{V}(I_3) 
\end{equation*}
where $I_3$ is the ideal defining $Y_3 \subset T_3$.\ Therefore the small contraction $\tau_3 \colon T_3 \dashrightarrow T_4$ happens away from $Y_3$.\ Finally, we have a map $\varphi' \colon Y_4 \rightarrow \mathcal{F}' = \Proj \mathbb{C}[x_2,y_4]$.\
A generic fibre is a surface $S$ given by 
\begin{equation*}
t\xi^3 + t^2y_2 + t\xi y_2 - y_2^2 + t^2\xi y_1 - \xi^3y_1 - ty_2y_1 - \xi y_1y_2 - t^2y_1^2       + y_2y_1^2 =0 
\end{equation*}
inside $\mathbb{P}(1_t,1_{\xi},1_{y_1},2_{y_2})$.\ Hence, $S$ is a del Pezzo surface of degree 2.\ Therefore we have the diagram
\begin{equation*}
\begin{tikzcd} 
& Y \arrow[swap]{dl}{\varphi} \arrow[]{r}{\simeq} & Y_1 \arrow[]{r}{\simeq} &  Y_2\arrow[swap]{dr}{\alpha_2} \arrow[dashed]{rr}{(-7,-1,1,5)} & & Y_3 \arrow[]{dl}{\beta_2} \arrow[]{r}{\simeq}   & Y_4\arrow{dr}{\varphi'}  & \\
 X &  &  &  & \mathcal{F}_2 &  &  & \mathbb{P}(1_{x_2},2_{y_4})  
\end{tikzcd}
\end{equation*}
where $\varphi' \colon Y_4 \rightarrow \mathbb{P}(1_{x_2},2_{y_4})$ is a del Pezzo fibration of degree two.\
\end{Ex}

\subsection{Case II: Divisorial contractions}

\begin{figure*}[h!]
        \centering
        \begin{subfigure}[b]{0.3\textwidth}
            \centering
            \resizebox{\linewidth}{!}{
\begin{tikzpicture}[every node/.style={scale=0.333}]
  \coordinate (A) at (0, 0);
  \coordinate [label={left:$t$}] (E) at (0, 1);
  \coordinate [label={above:$\xi,\,x_1$}] (K) at (1, 1);

\coordinate [label={left:$s$}] (5) at (0.5,1);
\coordinate [label={right:$z_1$}] (A1) at (1.2,0.8);
\coordinate [label={above:$z_2$}] (A2) at (1.4,0.6);
\coordinate [label={right:$z_3$}] (A3) at (1.4,0.4);
\coordinate [label={right:$z_4$}] (A4) at (1.4,0.2);
\coordinate [label={right:$z_5$}] (M2) at (1.3,0);

  \draw  (A) -- (E);
  \draw  (A) -- (K);
	\draw [very thick,color=red] (A) -- (5);
	\draw (A) -- (A1);
	\draw (A) -- (A2);
	\draw (A) -- (A3);
	\draw [very thick,color=red] (A) -- (A4);
    \draw  (A) -- (M2);

\end{tikzpicture}
}
            \caption[Case II.a.1]
            {{\small Case II.a.1}}    
            \label{fig:IIa1}
        \end{subfigure}
        \hfill
        \begin{subfigure}[b]{0.3\textwidth}
            \centering 
\resizebox{\linewidth}{!}{
\begin{tikzpicture}[every node/.style={scale=0.333}]
  \coordinate (A) at (0, 0);
  \coordinate [label={left:$t$}] (E) at (0, 1);
  \coordinate [label={above:$\xi,x_1$}] (K) at (1, 1);

\coordinate [label={left:$s$}] (5) at (0.5,1);
\coordinate [label={right:$z_1$}] (A1) at (1.2,0.8);
\coordinate [label={right:$z_2,\,z_3$}] (A2) at (1.4,0.6);
\coordinate [label={right:$z_4$}] (A3) at (1.4,0.3);

\coordinate [label={right:$z_5$}] (M2) at (1.3,0);

  \draw (A) -- (E);
  \draw  (A) -- (K);
	\draw [very thick,color=red](A) -- (5);
	\draw (A) -- (A1);
	\draw (A) -- (A2);
	\draw [very thick,color=red] (A) -- (A3);
    \draw (A) -- (M2);

\end{tikzpicture}
    }    
            \caption[]
            {{\small Case II.a.2}}    
            \label{fig:IIa2}
        \end{subfigure}
        \begin{subfigure}[b]{0.3\textwidth}   
            \centering 
            \resizebox{\linewidth}{!}{
\begin{tikzpicture}[every node/.style={scale=0.333}]
  \coordinate (A) at (0, 0);
  \coordinate [label={left:$t$}] (E) at (0, 1);
  \coordinate [label={above:$\xi,\,x_1$}] (K) at (1, 1);

\coordinate [label={left:$s$}] (5) at (0.5,1);
\coordinate [label={right:$z_1,\,z_2$}] (A1) at (1.2,0.8);
\coordinate [label={right:$z_3$}] (A2) at (1.4,0.6);
\coordinate [label={right:$z_4$}] (A3) at (1.4,0.3);
\coordinate [label={right:$z_5$}] (M2) at (1.3,0);

  \draw (A) -- (E);
  \draw (A) -- (K);
	\draw [very thick,color=red](A) -- (5);
	\draw (A) -- (A1);
	\draw (A) -- (A2);
	\draw [very thick,color=red] (A) -- (A3);
    \draw (A) -- (M2);
\end{tikzpicture}
}
            \caption[]
            {{\small Case II.a.3}}    
            \label{fig:IIa3}
        \end{subfigure}
  
        \vskip\baselineskip
        \begin{subfigure}[b]{0.4\textwidth}   
            \centering 
            \resizebox{\linewidth}{!}{
\begin{tikzpicture}[every node/.style={scale=0.333}]
  \coordinate (A) at (0, 0);
  \coordinate [label={left:$t$}] (E) at (0, 1);
  \coordinate [label={above:$\xi,\,x_1$}] (K) at (1, 1);

\coordinate [label={left:$s$}] (5) at (0.5,1);
\coordinate [label={right:$z_1$}] (A1) at (1.2,0.8);
\coordinate [label={right:$z_2$}] (A2) at (1.4,0.6);
\coordinate [label={right:$z_3,\,z_4\, \phantom{.......}$}] (A3) at (1.4,0.3);
\coordinate [label={right:$z_5$}] (M2) at (1.3,0);

  \draw  (A) -- (E);
  \draw  (A) -- (K);
	\draw  [very thick,color=red] (A) -- (5);
 	\draw (A) -- (A1);
	\draw (A) -- (A2);
	\draw  [very thick,color=red](A) -- (A3);

    \draw   (A) -- (M2);

\end{tikzpicture}
}
            \caption[]
            {{\small Case II.b.1}}    
            \label{fig:IIb1}
        \end{subfigure}
                \begin{subfigure}[b]{0.4\textwidth}   
            \centering 
            \resizebox{\linewidth}{!}{
\begin{tikzpicture}[every node/.style={scale=0.333}]
  \coordinate (A) at (0, 0);
  \coordinate [label={left:$t$}] (E) at (0, 1);
  \coordinate [label={above:$\xi,\,x_1$}] (K) at (1, 1);

\coordinate [label={left:$s$}] (5) at (0.5,1);

\coordinate [label={right:$z_1,\,z_2$}] (A2) at (1.4,0.7);
\coordinate [label={right:$z_3,\,z_4\, \phantom{.......}$}] (A3) at (1.4,0.3);

\coordinate [label={right:$z_5$}] (M2) at (1.3,0);

  \draw  (A) -- (E);
  \draw  (A) -- (K);
	\draw [very thick,color=red] (A) -- (5);

	\draw (A) -- (A2);
	\draw [very thick,color=red] (A) -- (A3);

    \draw (A) -- (M2);

\end{tikzpicture}
}
            \caption[]
            {{\small Case II.b.2}}    
            \label{fig:IIb2}
        \end{subfigure}
        \caption{\small The possible effective cones of $T$ ending with a contraction of the divisor $D_{z_5} \colon (z_5=0)$. In cases \hyperref[fig:IIa1]{II.a.1}, \hyperref[fig:IIa2]{II.a.2}, \hyperref[fig:IIa3]{II.a.3} the divisor $D_{z_5}$ is contracted to the point $\mathcal{F}'=\Proj \mathbb{C}[z_4]$ and in cases \ref{fig:IIb1} and \ref{fig:IIb2} to the rational curve $\mathcal{F}'=\Proj \mathbb{C}[z_3,z_4]$.\ The variables $z_1,\ldots,z_5$ are $y_1,\ldots,y_4,x_2$ up to permutation.\ The exceptional divisor of the Kawamata blowup $\varphi \colon Y \rightarrow X$ is $E\colon (t=0)$.}
        \label{fig:pliconj}
    \end{figure*}
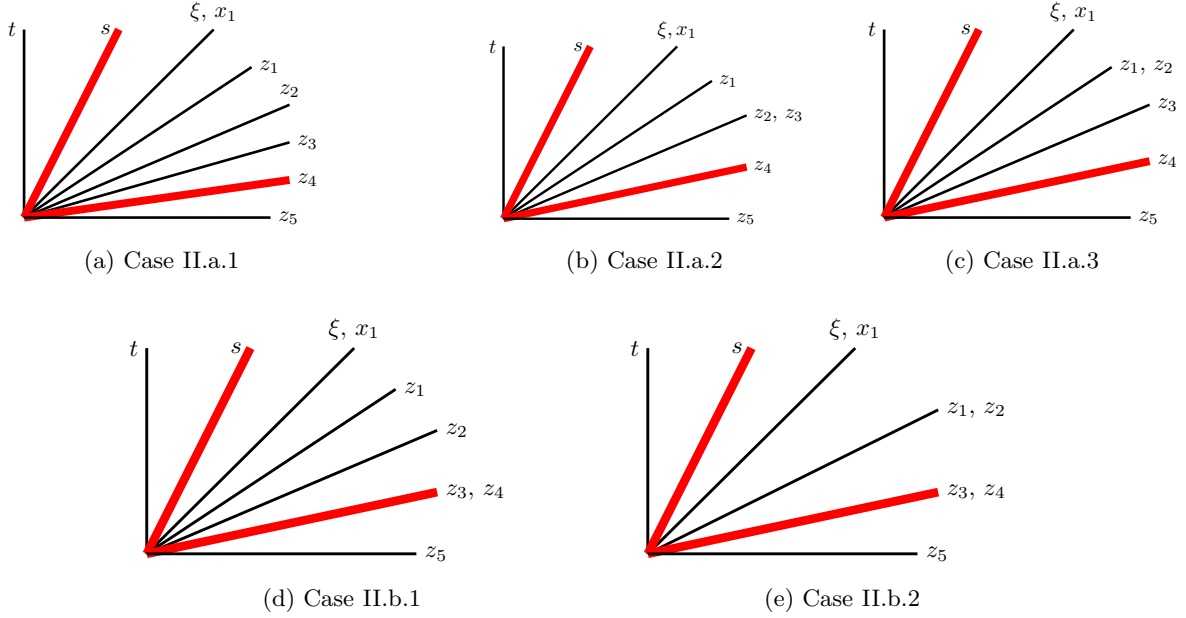

In each case in Figure \ref{fig:pliconj}, whose notation we refer to, the movable cone of $T$ is strictly contained in the effective cone of $T$.\ Hence we have a diagram of toric varieties
\begin{equation*}
\begin{tikzcd} 
T\arrow[swap]{dd}{\Phi} \arrow[dashed]{rr}{\tau}& &  T'\arrow{dd}{\Phi'}\\
&&\\
 \mathbb{P} \arrow[swap,dashed]{rr}{\sigma} &  &\mathcal{F}'  
\end{tikzcd}
\end{equation*}
where $\Phi$ and $\Phi'$ are divisorial contractions and $\tau$ is a small $\mathbb{Q}$-factorial modification.\ 
As usual, we restrict $\Phi \colon T \rightarrow \mathbb{P}$ to be the unique Kawamata blowup $\varphi \colon E \subset Y \rightarrow \mathbf{p_s} \in X$.\ 
By Lemmas \ref{lem:iso} and \ref{lem:wall2}, the map $\tau|_Y \colon Y \dashrightarrow Y' $ is an isomorphism followed by a finite sequence of isomorphisms or flips.\ 
Referring to the notation in Figure \ref{fig:pliconj}, by assumption the rays $\mathbb{R}_+[z_4]$ and $\mathbb{R}_+[z_5]$ are given by linearly independent vectors $v_4$ and $v_5$ in $\mathbb{Z}^2$.\ Let $-d \not= 0$ be the determinant of the matrix $B$ whose columns are $v_4$ and $v_5$; without loss of generality we can assume that $d >0$ (cf \cite[Lemma 2.4]{hamidplia}).\ Then, there is $A \in \GL(2,\mathbb{Z})$ such that 
\begin{equation*}
A\cdot B = \begin{pmatrix}
0 & -d \\
d & 0 
\end{pmatrix} \; .
\end{equation*}
After a row operation on the grading of $T'$ via the matrix $A$, $T'$ is isomorphic to a toric variety with $\mathbb{C}^* \times \mathbb{C}^*$-action given by
\begin{equation} \label{mat}
\begin{array}{ccccccccccccc}
             &       & t  & s &   \xi & x_1 & z_1 & z_2 & z_3 & z_4 & z_5 \\
\actL{T'}   &  \lBr &  \kappa_0 & \kappa_1 &   \kappa_2 & \kappa_3 & \kappa_4 & \kappa_5 & \kappa_6 & 0 & -d &   \actR{}\\
             &       & \lambda_0 & \lambda_1 &  \lambda_2 & \lambda_3 & \lambda_4 & \lambda_5 & \lambda_6 & d & 0 &  
\end{array}
\end{equation}
where every $2 \times 2$ minor is non-positive.\ In cases II.a the irrelevant ideal of $T'$ is $(t , s,  \xi, x_1, z_1, z_2, z_3) \cap (z_4, z_5)$, and $(t , s,   \xi, x_1, z_1, z_2) \cap (z_3, z_4, z_5)$ in cases II.b.\

\paragraph{Cases II.a.\ Divisorial contractions to a point.\ } We have $\kappa_6 \not = 0$ by assumption.\ The map $\Phi'$ is given by the sections multiples of $D_{z_4}$ (in the notation of \cite[Section 4.1.7]{2raybrown}) and is a divisorial contraction to a point in $\mathcal{F}'$.\ This can be realised as 
\begin{align*}
\Phi' \colon T' &\longrightarrow \mathcal{F}' \\
(t , s,  \xi, x_1, z_1, z_2, z_3, z_4, z_5) & \longmapsto (tz_5^{\frac{\kappa_0}{d}},sz_5^{\frac{\kappa_1}{d}},\xi z_5^{\frac{\kappa_2}{d}},x_1 z_5^{\frac{\kappa_3}{d}}, z_1z_5^{\frac{\kappa_4}{d}}, z_2 z_5^{\frac{\kappa_5}{d}}, z_3z_5^{\frac{\kappa_6}{d}},z_4) \; .
\end{align*}
Assume $X$ is not the family \#39660.\ Then, $\mathcal{F}'= \mathbb{P}(\lambda_0, \ldots, \lambda_6, d)$ and $\Phi'$ contracts the divisor $E' \colon (z_5=0) \simeq \mathbb{P}(\kappa_0, \ldots,\kappa_6)$ to the point $\mathbf{p}=(0:\cdots : 0: 1) \in \mathcal{F}'$.\ In particular $\mathbf{p}$ is smooth whenever $d=1$.\ The contraction $\Phi'$ restricts to a weighted blowup $\varphi' \colon Y' \rightarrow X'$ as in Table \ref{tab:11111}.\

\begin{table}[ht!]
    \centering
   \resizebox{\textwidth}{!}{
   \begin{tabular}[t]{l c c c c}
        \toprule
        ID   & Centre     & $T'$ &   $\varphi'$           & $X' \subset \mathcal{F}'$     \\ \midrule

\#39569  & 1/7 (II$_2$) & $\begin{pmatrix}
t & s_0 & \xi & y & s_1 & s_2 & v & u & z \\
5 & 6 & 1 & 3 & 4 & 2 & 1 & 0 & -1\\
3 & 5 & 1 & 3 & 4 & 3 & 2 & 1 & 0
\end{pmatrix}$& $(5,1,3,1)$ & $ X'_{6,6} \subset \mathbb{P}(1^2,2,3^3)$ \\

\#39660  & 1/17 & $\begin{pmatrix}
t & s & \xi & x_1 & y_3 & x_2 & y_2 & y_1 & y_4 \\
3 & 10 & 1 & 6 & 2 & 1 & 1 & 0 & -2\\
1 & 9 & 1 & 6 & 3 & 2 & 2 & 1 & 0
\end{pmatrix}$& $\frac{1}{2}(1,1,1)$ & $ X'_{4,5} \subset \mathbb{P}(1^3,2^2,3)/ \bm{\mu}_2$ \\

\#39890 & 1/11  & $\begin{pmatrix}
t & s & \xi & x_1 & y_3 & y_2 & y_1 & y_4 & x_2\\
8 & 15 & 2 & 10 & 5 & 3 & 1 & 0 & -3\\
1 & 6 & 1 & 5 & 4 & 3 & 2 & 3 & 0
\end{pmatrix}$& $ \frac{1}{3}(2,10,5,3,1)$ & 
$ X' \subset \mathbb{P}(1^2,2,3^2,4,5,6)$   \\
        
\#39906 & 1/7  & $\begin{pmatrix}
t & s & \xi & x_1 & y_3 & y_2 & y_1 & y_4 & x_2\\
4 & 9 & 2 & 6 & 5 & 3 & 1 & 0 & -1\\
1 & 4 & 1 & 3 & 3 & 2 & 1 & 1 & 0
\end{pmatrix}$& $(2,5,3,1)$ & $ X' \subset \mathbb{P}(1^4,2,3,4)$ \\

\#39912 & 1/11 & $\begin{pmatrix}
t & s & \xi & x_1 & y_3 & x_2 & y_2 & y_4 & y_1\\
4 & 13 & 2 & 6 & 3 & 3 & 1 & 0 & -1\\
1 & 6 & 1 & 3 & 2 & 2 & 1 & 1 & 0
\end{pmatrix}$& $(2,3,3,1)$ & $ X' \subset \mathbb{P}(1^4,2^2,3)$ \\

\#39913 & 1/5 & $\begin{pmatrix}
t & s & \xi & x_1 & y_3 & y_2 & y_1 & y_4 & x_2\\
4 & 9 & 2 & 6 & 3 & 3 & 1 & 0 & -1\\
1 & 4 & 1 & 3 & 2 & 2 & 1 & 1 & 0
\end{pmatrix}$& $(2,3,3,1)$ & $ X' \subset \mathbb{P}(1^3,2^2,3,4)$ \\

\#39928 & 1/13 & $\begin{pmatrix}
t & s & \xi & x_1 & x_2 & y_3 & y_2 & y_4 & y_1\\
4 & 15 & 2 & 4 & 7 & 3 & 1 & 0 & -1\\
1 & 7 & 1 & 2 & 4 & 2 & 1 & 1 & 0
\end{pmatrix}$ & $(2,4,7,3,1)$ & $ X'_{3,4} \subset \mathbb{P}(1^4,2^2)$   \\

\#39929  & 1/5  & $\begin{pmatrix}
t & s & \xi & x_1 & y_3 & y_2 & y_1 & y_4 & x_2\\
4 & 7 & 2 & 4 & 7 & 3 & 1 & 0 & -1\\
1 & 3 & 1 & 2 & 4 & 2 & 1 & 1 & 0
\end{pmatrix}$& $(4,2,7,3,1)$ & $ X' \subset \mathbb{P}(1^4,2^2,3)$ \\

\#39934   & 1/5  & $\begin{pmatrix}
t & s & \xi & x_1 & y_3 & y_2 & y_1 & y_4 & x_2\\
4 & 7 & 2 & 4 & 3 & 3 & 1 & 0 & -1\\
1 & 3 & 1 & 2 & 2 & 2 & 1 & 1 & 0
\end{pmatrix}$& $(2,3,3,1)$ & $ X' \subset \mathbb{P}(1^4,2^3,3)$ \\
        \bottomrule
    \end{tabular}}
\caption{We list the restriction $\Phi'|_{Y'}=\varphi'$ and the model to which $\varphi'$ contracts to.\ In each case $\varphi'$ is a weighted blowup with weights $\frac{1}{r}(a_1,\ldots,a_l)$ with $r\geq 1$.\ For case \#39890 $\varphi'$ is a contraction to a hyperquotient singularity; in the other instances, $\varphi'$ is a contraction to a Gorenstein point.\ The family \#39569 is embedded in the weighted projective space $\mathbb{P}^7(2,3,5,6,7,7,8,9)$ with coordinates $\xi,z,u,y,v,s_0,s_1,s_2$.\ }
\label{tab:11111}
\end{table}

The following explicit example illustrates a link that terminates with a 3-fold in a fake weighted projective space.\
\begin{Ex}

Consider the deformation family with ID \#39660 in Tom format.\ This is $X \subset \mathbb{P} \coloneqq \mathbb{P}(2,2,3,5,5,7,12,17)$ with homogeneous coordinates $\xi,\,y_4,\,y_1,\,x_2,\,y_2,\,y_3,\,x_1,\,s$.\ By Lemma \ref{lem:lift} we know that the weighted blowup of $\mathbf{p_s}=(0:\ldots : 0 : 1) \in \mathbb{P}$ restricts to the Kawamata blowup $\varphi \colon Y \subset T \rightarrow X \subset \mathbb{P}$ provided that the bi-grading of $T$ is
\begin{equation*}
\begin{array}{cccc|ccccccccc}
             &       & t  & s &   \xi & x_1 & y_3 & x_2 & y_2 & y_1 & y_4 & \\
\actL{T}   &  \lBr &  3 & 10 &   1 & 6 & 2 & 1 & 1 & 0 & -2 &   \actR{}\\
             &       & 1 & 9 &  1 & 6 & 3 & 2 & 2 & 1 & 0 &  
\end{array}
\end{equation*}
up to multiplication by a matrix in $\GL(2,\mathbb{Z})$ (see \cite[Lemma 2.4 and Example 2.13]{hamidplia}).\ We have a sequence of small $\mathbb{Q}$-factorial modifications  $\tau \colon T \dashrightarrow T'$ where $T'$ has the same Cox ring as $T$ but whose irrelevant ideal is $(t,s,\xi,x_1,y_3,x_2,y_2) \cap (y_1,y_4)$.\ 
Let $T^{y_4}$ be the same rank 2 toric variety as $T'$, except that $y_4$ has bi-degree $(-1,0)$, instead of $(-2,0)$.\ Then, there is a map $q \colon T^{y_4} \rightarrow T'$ given by $y_4 \mapsto y_4^2$ and $T^{y_4}$ can be seen as a double cover of $T'$.\ 
On the other hand, $T'=\Proj \mathbb{C}[t,s,\xi,x_1,y_3,x_2,y_2,y_1,y_4]^{\bm{\mu}_2}$, where $\bm{\mu}_2$ is the multiplicative cyclic group of order 2 acting on $\mathbb{C}[t,s,\xi,x_1,y_3,x_2,y_2,y_1,y_4]$ via
\begin{equation*}
 \epsilon \cdot (t,s,\xi,x_1,y_3,x_2,y_2,y_1,y_4) = (t,s,\xi,x_1,y_3,x_2,y_2,y_1,\epsilon y_4) \; .
\end{equation*}
Hence, $q$ is the quotient map of $T^{y_4}$ under this group action.\ Consider the map 
\begin{align*}
\Phi^{y_4} \colon T^{y_4} &\longrightarrow \mathbb{P}(1,9,1,6,3,2,2,1) \\
(t , s,   \xi, x_1, y_3, x_2, y_2, y_1, y_4) & \longmapsto (t y_4^3,s y_4^{10},\xi y_4, x_1 y_4^6, y_3 y_4^2, x_2 y_4, y_2 y_4, y_1) \; .
\end{align*}
This is the map given by the sections multiples of $H^0(T^{y_4},\mathcal{O}_{T^{y_4}}(0,1))$ and it corresponds to a divisorial contraction to a point in $\mathbb{P}(1,9,1,6,3,2,2,1)$.\ 
Since $\Phi^{y_4}$ is an isomorphism away from the locus $(y_4=0)$, the action on $T^{y_4}$ is carried through to $\mathbb{P}(1,9,1,6,3,2,2,1)$.\ 

The action at each point of this weighted projective space is then given by 
\begin{align*}
 \epsilon \cdot (t: s: \xi: x_1: y_3: x_2: y_2: y_1: y_4) &= (\epsilon t: s: \epsilon \xi : x_1 : y_3 : \epsilon x_2 : \epsilon y_2 : y_1) \\
 &=( t:\epsilon   s: \xi : x_1 :\epsilon y_3 : \epsilon x_2 : \epsilon y_2 : \epsilon y_1) \; .
\end{align*}

Then, $\Phi^{y_4}$ descends to a divisorial contraction of quotient spaces 
\begin{equation*}
\Phi' \colon T'=T^{y_4}/\bm{\mu}_2 \rightarrow \mathbb{P}(1,9,1,6,3,2,2,1)/\bm{\mu}_2 \; .
\end{equation*} 
Let $Y'$ be the image of $\tau$ restricted to $Y$. The map $\Phi'$ restricts to a divisorial contraction $\varphi' \colon Y' \rightarrow X'$ to the point $\mathbf{p_{y_1}} \sim \frac{1}{2}(1,1,1) \in X'$ in the complete intersection of a quartic and a quintic $X' \subset \mathbb{P}(1,1,3,2,2,1)/\bm{\mu}_2$ with homogeneous variables $t,\xi, y_3, x_2, y_2, y_1$ which is given by the equations
\begin{equation*}
\begin{cases}
t^4 + t \xi^3 - y_3 y_1 - y_2^2 = 0 \\
t x_2^2 + t y_1^4 - \xi^5 - \xi^2 x_2 y_1 + y_3 y_2 = 0  \; .
\end{cases}
\end{equation*}
Note that $X'$ is invariant under the action of $\bm{\mu}_2$.\ By Lemma \ref{lem:discr}, the map $\varphi'$ has discrepancy $\frac{1}{2}$ and its exceptional divisor is isomorphic to $\mathbb{P}^2$.\ Hence, $\varphi'$ is the Kawamata blowup centred at $\mathbf{p_{y_1}}\in X'$.\
\end{Ex}

\paragraph{Cases II.b.\ Divisorial contractions to a rational curve.\ } By assumption, $\kappa_6 = 0$ in the grading of $T'$ in \eqref{mat}.\ The map $\Phi'$ is given by the sections that are multiples of the divisor $(z_4 =0)$, and is a divisorial contraction to the rational curve $\Gamma' := \Proj \mathbb{C}[\lambda_6,d]$.\ By Lemma \ref{lem:eqnsY}, $\Phi'$ restricts to a divisorial contraction to a curve at the level of 3-folds.\ By terminality, it follows that $\gcd(\lambda_6,d)=1$ since, otherwise, the curve $\Gamma'$ would be a line of singularities.\ In fact, by looking at each case we can see that $d=1$.\ The map $\Phi'$ is then 
\begin{align*}
\Phi' \colon T' &\longrightarrow \mathcal{F}' \\
(t , s,   \xi, x_1, z_1, z_2, z_3, z_4, z_5) & \longmapsto (tz_5^{\kappa_0},sz_5^{\kappa_1},\xi z_5^{\kappa_2},x_1 z_5^{\kappa_3}, z_1z_5^{\kappa_4}, z_2 z_5^{\kappa_5}, z_3,z_4) \; .
\end{align*}
Hence, the divisor $(z_5=0) \subset T'$ which is isomorphic to $\mathbb{P}(\kappa_0,\ldots,\kappa_5)$ is contracted to $\Gamma' \subset \mathbb{P}(\lambda_0,\ldots,\lambda_6,1)$.\ In Table \ref{tab:1121} we summarise the details regarding all cases falling into II.b.\

\begin{table}[ht!]
    \centering
   \resizebox{\textwidth}{!}{\begin{tabular}[t]{l c c c c}
        \toprule
        ID    & Centre    & $T'$              & $\Gamma' \subset  X' \subset \mathcal{F}'$     \\ \midrule

\#39557  & 1/11 &  $\begin{pmatrix}
t&s&\xi& x_1 &y_3 & y_2 & y_1 & x_2 & y_4\\
5& 8 & 1 & 3 & 2 & 1 & 0 & 0 & -1\\
8&15&2&6&5 & 3 & 1 & 1 & 0
\end{pmatrix}$ &   $\mathbb{P}^1\subset X'_{10} \subset \mathbb{P}(1,1,2,3,5)$   \\
        
\#39605 & 1/13 & $\begin{pmatrix}
t&s&\xi& x_1 &y_3 & y_2 & y_1 & x_2 & y_4\\
3& 8 & 1 & 5 & 2 & 1 & 0 & 0 & -1\\
4&15&2&10&5 & 3 & 1 & 1 & 0
\end{pmatrix}$&    $\mathbb{P}^1\subset X'_{6,8} \subset \mathbb{P}(1,1,2,3,4,5)$ \\
        
\#39675 & 1/9 & $\begin{pmatrix}
t&s&\xi& x_1 &x_2 & y_3 & y_1 & y_2 & y_4\\
3 & 6  & 1 & 1 & 2 & 1 & 0 & 0 & -1\\
4 & 11 & 2& 2 &5 & 3 & 1 & 1 & 0
\end{pmatrix}$& $\mathbb{P}^1 \subset X'_{6,6} \subset \mathbb{P}(1,1,2,2,3,5)$   \\

\#39678 & 1/5 & $\begin{pmatrix}
t & s & \xi & x_1 &y_2 & y_3 & x_2 & y_1 & y_4\\
3 & 4  & 1 & 1 & 1 & 1 & 0 & 0 & -1\\
4 & 7 & 2& 2 &3 & 3 & 1 & 1 & 0
\end{pmatrix}$& $\mathbb{P}^1 \subset X'_{4,6} \subset \mathbb{P}(1,1,2,2,3,3)$   \\

\#39676  & 1/7 & $\begin{pmatrix}
t & s & \xi & x_1 &x_2 & y_3 & y_2 & y_1 & y_4\\
3 & 5  & 1 & 1 & 1 & 1 & 0 & 0 & -1\\
4 & 9 & 2& 2 &3 & 3 & 1 & 1 & 0
\end{pmatrix}$& $\mathbb{P}^1 \subset X'_{4,6} \subset \mathbb{P}(1,1,2,2,3,3)$   \\

\#39898 & 1/9  &$\begin{pmatrix}
t&s&\xi& x_1 &y_3 & y_2 & y_1 & y_4 & x_2\\
3& 6 & 1 & 4 & 2 & 1 & 0 & 0 & -1\\
1 & 5 &1 &4 & 3 & 2 & 1 & 2 & 0
\end{pmatrix}$& $\mathbb{P}^1(1,2)\subset X' \subset \mathbb{P}(1^3,2^2,3,4,5)$ \\
        \bottomrule
    \end{tabular}}
\caption{We list the restriction $\Phi'|_{Y'}=\varphi'$ and the model to which $\varphi'$ contracts to.\ In each case $\varphi'$ is a contraction to a curve $\Gamma'$ inside a Fano 3-fold $X'$.\ }
\label{tab:1121}
\end{table}

\paragraph{Divisorial contractions to curves with non-rational components.\ } \label{subsect: comparison with Ducat}

Here we look into more detail at the case in which the birational link terminates with a divisorial contraction to a non-complete intersection curve.\ 
The families falling into this description are \#40663, \#40671, \#40672 and \#40993.\ Notice that the families \#40663 and \#40993 have been treated in \cite[Table~1]{ducat} and are referred to as A.3 and A.2 respectively.\ 
In that paper, Ducat constructs these two families via simple Sarkisov links initiated by blowing up a curve $\Gamma$ on a rational 3-fold.\ 

It turns out that these are not the only codimension 4 and index 2 Fano 3-folds which can be obtained in this way.\ Here we rely on the construction of $X$ as in Section \ref{sect:cons} and show that \#40671 and \#40672 are rational via a reversed procedure to the one in \cite{ducat}.\ These two examples are interesting also because the Sarkisov link at the toric level ends with a fibration while its restriction to the 3-folds ends with a divisorial contraction to a non-rational curve.\ 
Moreover, we compute the Picard rank of \#40671 and \#40672.\

\paragraph{\#40672.\ } 

Let $X \subset \mathbb{P} \coloneqq \mathbb{P}(1,1,1,2,2,2,3,3)$.\ with homogeneous coordinates $y_1,y_2,x_2,\xi,y_4,x_1,y_3,s$ be a quasi-smooth member of the family \#40672 as in Section \ref{sect:cons} and \cite{CampoC4I2}.\
    
After the Kawamata blowup of the point $\mathbf{p_s} \sim \frac{1}{3}(1,1,2)$ and the isomorphism $\tau_0$ in the birational link, we have,
\begin{equation*}
\begin{array}{cccccc|ccccccc}
             &       & t  & s &   \xi & x_1 & y_3 & y_1 & y_2 & x_2 & y_4 & \\
\actL{T_1}   &  \lBr &  -3 & -3 &   -1 & -1 & 0 & 1 & 1 & 1 & 2 &   \actR{}\\
             &       & 1 & 2 &  1 & 1 & 1 & 0 & 0 & 0 & 0 &  
\end{array} \; .
\end{equation*}
The small $\mathbb{Q}$-factorial modification $\tau_1$ contracts $\mathbb{P}(3,3,1,1) \subset T_1$ to a point and extracts $\mathbb{P}(1,1,1,2) \subset T_2$ where $T_2$ has the same Cox ring as $T_1$ but irrelevant ideal $(t,s,\xi,x_1,y_3)\cap (y_1,y_2,x_2,y_4)$.\ This restricts to the flip
\begin{equation*}
    \begin{tikzcd}[ampersand replacement=\&, column sep = 2em]
             C_1 \subset Y_1   \ar[rr, dashed ] \ar[dr, swap, "\displaystyle{\alpha_1}" ] \& {} \& Y_2 \supset C_2\ar[ld, "\displaystyle{\beta_1}" ] \\
             {} \& \mathbf{p_{y_3}} \& {}
    \end{tikzcd}
\end{equation*}
where $C_1 \colon (t+s+2 x_1^3=\xi-x_1=0) \subset \mathbb{P}(3,3,1,1)$, that is, $C_1 \simeq \mathbb{P}(3_t,1_{x_1})$ and $C_2 \colon (y_1+x_2=y_2+x_2=0) \subset \mathbb{P}(1,1,1,2)$, that is $C_2 \simeq \mathbb{P}(1_{x_2},2_{y_4})$.\ Also $-K_{Y_1} \sim (\xi =0) \subset Y_1$ and $E \sim (t=0) \subset Y_1$.\

It is clear that $-K_{Y_1}\cdot C_1=\frac{1}{3}$.\ 
On the other hand, we have $-K_{Y_2}\sim D_{y_3} - D_{x_2}$.\ Hence, $-K_{Y_2} \cdot C_2 = -\frac{1}{2}$.\ Consider the map $\Phi'$ 
\begin{align*}
    \Phi' \colon T_2 &\rightarrow \mathbb{P}(1,1,1,2) \\
    (t,s,\xi,x_1,y_3,y_1,y_2,x_2,y_4) &\mapsto (y_1,y_2,x_2,y_4) \; .
\end{align*}

Then $\Phi'$ is a fibration whose fibres are isomorphic to $\mathbb{P}(1,2,1,1,1)$.\ 
Consider the two consecutive projections 
$X \dashrightarrow X' \dashrightarrow X''$ where $X \dashrightarrow X'$ is the projection away from $\mathbf{p}_s \in X$ and $X' \dashrightarrow X''$ is the projection away from $\mathbf{p}_{y_3} \in X'$.\ The equations of $X''$ are given explicitly by 
\begin{equation*}
\setlength\arraycolsep{7pt}
\begin{pmatrix}
-y_1 & y_2 & y_2^2x_2+y_1x_2^2-x_2y_4\\
y_4-y_1^2-y_1y_2-y_1x_2-y_2x_2 & -y_4 & -y_1^4+y_1^3y_2+y_2^2y_4+y_1x_2y_4-y_4^2
\end{pmatrix}
\begin{pmatrix}
x_1 \\
\xi \\
1
\end{pmatrix}=
\begin{pmatrix}
0 \\
0 
\end{pmatrix} \; .
\end{equation*}
Let $\Gamma \subset \mathbb{P}(1,1,1,2)$ be defined by the three $2\times 2$ minors of the matrix above.\ The curve $\Gamma$ has generically two irreducible and reduced components, which can be easily checked on \verb|Magma|: one is rational, and the other has genus 4.\ We have that $\Phi'|_{Y_2}$ is a divisorial contraction to $\Gamma \subset \mathbb{P}(1,1,1,2)$.\ 
Since $\Gamma$ has two irreducible components, $\rho_X =2$.\

\paragraph{\#40671.\ } This case is completely analogous to the previous one.\ We give the $2\times 3$ matrix whose $2\times2$ minors define $\Gamma \subset \mathbb{P}(1,1,1,2)$.\ This is 
\begin{equation*}
\setlength\arraycolsep{7pt}
\begin{pmatrix}
y_1 & -y_2 & -y_3y_4\\
-y_2 y_3 & -y_1^2+y_2y_3 & -y_1y_3^3+y_2^4+y_4^2
\end{pmatrix} \; .
\end{equation*}

The curve $\Gamma$ has one rational irreducible component, and another irreducible component with genus 5.\ As before, $\rho_X =2$.\

\subsubsection{Wrapping up: conclusion of non-solidity proof} \label{sub:wrap}

We finalise the proof of non-solidity for the families whose link terminates with a divisorial contraction.\  

We prove that all models $X'$ in Tables \ref{tab:11111} and \ref{tab:1121} admit a structure of a strict Mori fibre space, with the possible exception of two.\

\begin{Lem}
Let $X$ be a family in Table \ref{tab:1121}.\ Then $X$ is non-solid.\
\end{Lem}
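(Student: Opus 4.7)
The plan is to reduce non-solidity of each family in Table \ref{tab:1121} to Corollary \ref{cor:nonsolid} applied to the Fano 3-fold $X'$ at which the corresponding Sarkisov link terminates. The key observation is that, reading off the last column of Table \ref{tab:1121}, every target $X'$ is embedded in a weighted projective space whose two smallest weights are $a_0=a_1=1$: explicitly $X'_{10}\subset\mathbb{P}(1,1,2,3,5)$ for \#39557, $X'_{6,8}\subset\mathbb{P}(1,1,2,3,4,5)$ for \#39605, $X'_{6,6}\subset\mathbb{P}(1,1,2,2,3,5)$ for \#39675, $X'_{4,6}\subset\mathbb{P}(1,1,2,2,3,3)$ for both \#39678 and \#39676, and the codimension-4 model $X'\subset\mathbb{P}(1^3,2^2,3,4,5)$ for \#39898. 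In each case, a direct weighted-adjunction calculation gives $-K_{X'}=2H$, so $\iota_{X'}=2$ and the condition $\lcm(a_0,a_1)=1<\iota_{X'}$ of Corollary \ref{cor:nonsolid} is satisfied.

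The next step is to certify that each such $X'$ is a $\mathbb{Q}$-factorial terminal Fano 3-fold with $\rho(X')=1$, so that Corollary \ref{cor:nonsolid} may be applied. This is built into the 2-ray game of Section \ref{sec:links and mds}: by Lemmas \ref{lem:iso} and \ref{lem:wall2} the toric link restricts to an isomorphism followed by a (possibly empty) chain of flips and a single divisorial contraction $\varphi'\colon Y'\to X'$, so $\rho(X')=\rho(Y')-1=1$. Terminality and $\mathbb{Q}$-factoriality are preserved through each flip by \cite[Lemma~3.38]{KollarMori} and survive the final divisorial contraction, since we are operating throughout in the Mori category.

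With these hypotheses in place, Corollary \ref{cor:nonsolid} produces, for each $X'$, a birational map $X'\dashrightarrow(Z\to S)$ to a strict Mori fibre space with $\dim S>0$. Composing with the Sarkisov link $X\dashrightarrow X'$ constructed in this Section yields a birational map from $X$ to the same strict Mori fibre space, proving $X$ is non-solid. The main ``obstacle'' is therefore not an analytic one but a matter of bookkeeping: one must confirm case-by-case that the ambient weighted projective space of each $X'$ in Table \ref{tab:1121} genuinely starts with two weight-one variables and that the index computation yields $\iota_{X'}=2$, both of which are immediate from the tabulated data.
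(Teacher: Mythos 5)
Your proposal is correct and follows essentially the same route as the paper: both reduce the claim to Corollary \ref{cor:nonsolid} applied to the terminal model $X'$ of each link (the paper phrases this via the projection $X'\dashrightarrow\mathbb{P}^1$ and adjunction on the generic fibre, which is exactly the content of that corollary). Your added verification of $\rho(X')=1$, terminality and $\mathbb{Q}$-factoriality along the 2-ray game is left implicit in the paper but is the same argument; the only minor caveat is that for the codimension-4 target of \#39898 the index is not read off by a one-line adjunction computation as it is for the complete intersections.
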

\begin{proof}
We consider the projection $\pi \colon X' \dashrightarrow \mathbb{P}^1$ in each case.\ Then, the generic fibre $S \subset X'$ is a surface for which $-K_S \sim \mathcal{O}_S(1)$ by adjunction since $X'$ has Fano index 2.\ The conclusion follows from Corollary \ref{cor:mainold}.\ 
\end{proof}

We show next that the families \#39890 and \#39928  in Table \ref{tab:11111} admit singular birational models in a family whose general member is a quasi-smooth complete intersection of a cubic and a quartic  $X'_{3,4} \subset \mathbb{P}(1,1,1,1,2,2)$.\ It was shown by Corti and Mella \cite{cortimella} that its general member is bi-rigid.\ In this case, we were not able to show that these are non-solid: these are the non-birationally rigid families mentioned in Theorem \ref{thm:mainC4I2version2}.\ 

\begin{Lem} \label{lem:solid}
Let $X$ be a quasi-smooth member of either family \#39890 or \#39928 in Table \ref{tab:11111}.\ Then $X$ is birational to a non-quasismooth complete intersection of a cubic and a quartic $X'_{3,4} \subset \mathbb{P}(1,1,1,1,2,2)$.\
\end{Lem}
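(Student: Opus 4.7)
The plan is to handle the two families separately, using the explicit Sarkisov links that Section \ref{sec:links and mds} has already produced.

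For family \#39928, the link displayed in Table \ref{tab:11111} already terminates with a divisorial contraction $\varphi' \colon Y' \rightarrow X'_{3,4} \subset \mathbb{P}(1^4,2^2)$ with weights $(2,4,7,3,1)$, so the birational equivalence $X \dashrightarrow X'_{3,4}$ is provided directly by the 2-ray game. To prove that this specific member $X'_{3,4}$ fails to be quasi-smooth, I would track the equations of $X'$ that arise after saturating the Cox ring of $T'$ by the new irrelevant ideal. Since the weights $(2,4,7,3,1)$ of $\varphi'$ are far from a standard blowup, the image point $\mathbf{q}\in X'_{3,4}$ of the exceptional divisor cannot be a Gorenstein smooth point of the ambient $\mathbb{P}(1^4,2^2)$ in the standard way; examining the cubic and quartic equations of $X'_{3,4}$ at $\mathbf{q}$ will reveal that the Jacobian criterion for quasi-smoothness fails there, because the equations are forced by the discrepancy data to have a common critical locus.

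For family \#39890, the link of Table \ref{tab:11111} lands instead at a model $X' \subset \mathbb{P}(1^2,2,3^2,4,5,6)$ carrying a hyperquotient singularity of type $\frac{1}{3}(2,10,5,3,1)$, so additional birational work is required to reach $\mathbb{P}(1^4,2^2)$. I would produce a sequence of projections away from the high-weight coordinates of $X'$, taking advantage of the fact that, by construction of the double cover in Section \ref{sect:cons}, the defining equations of $X'$ depend linearly on the top-weight variables. Successively projecting away from the coordinate points of weights $6$, $5$, $4$ (and possibly one of the weight $3$ coordinates), and resolving the resulting indeterminacies as in Subsection \ref{subsect: Kawamata blowup}, should yield a model in $\mathbb{P}(1^4,2^2)$ cut out by equations of degrees $3$ and $4$. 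Non-quasi-smoothness of the resulting $X'_{3,4}$ then follows because the hyperquotient singularity and the points of indeterminacy introduced by projection both degenerate the Jacobian at explicit coordinate points.

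The main obstacle will be controlling family \#39890: one must chain several projections while keeping track of which equations of $X'$ become redundant at each step, and verify that the composite rational map is genuinely birational onto a 3-fold complete intersection of the claimed bidegree, rather than a degeneration landing in higher codimension. Once this is established, the non-quasi-smoothness assertion in both cases is a direct computation with the resulting equations, since the singular points forced upon $X'_{3,4}$ are at coordinate points where the partial derivatives of the cubic and quartic can be read off immediately.
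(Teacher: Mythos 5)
Your treatment of \#39928 is essentially the paper's: the 2-ray game of Table \ref{tab:11111} already lands on $X'_{3,4}\subset\mathbb{P}(1^4,2^2)$, and non-quasi-smoothness is then a direct Jacobian computation on the explicit equations. The paper does not give a formal proof of the lemma at all; its only substantiation is the Example that follows, which writes down the cubic and quartic explicitly and one checks that all partial derivatives of the quartic vanish at $\mathbf{p}_{z_2}$ (the centre of the inverse link), so the complete intersection is singular there. Your heuristic that the weights $(2,4,7,3,1)$ are ``far from a standard blowup'' is not itself an argument --- non-quasi-smoothness has to be read off from the equations --- but the plan of computing those equations from the link and checking the Jacobian is the right one and is what the paper implicitly does.

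The \#39890 half of your proposal contains a genuine error. You propose to pass from $X'\subset\mathbb{P}(1^2,2,3^2,4,5,6)$ to a model in $\mathbb{P}(1,1,1,1,2,2)$ by successively projecting away from the coordinate points of weights $6$, $5$, $4$ and possibly a weight-$3$ coordinate. A projection from a coordinate point only deletes that weight from the list; it never alters the remaining weights or creates new ones. Starting from the multiset $\{1,1,2,3,3,4,5,6\}$ you can therefore only ever reach ambient spaces such as $\mathbb{P}(1,1,2,3,3)$ or $\mathbb{P}(1,1,2,3)$, never $\mathbb{P}(1,1,1,1,2,2)$, which requires four weight-$1$ coordinates when only two are available. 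So the proposed chain of projections cannot produce the claimed $X'_{3,4}$, and the ``main obstacle'' you identify (tracking redundant equations) is not the real difficulty --- the mechanism itself is impossible. Reaching the $X_{3,4}$ model for \#39890 requires a different birational modification (e.g.\ a further Sarkisov link out of $X'$, or a link from a different centre on $X$), not coordinate projections; the paper, for its part, leaves this case unjustified in the text, so the burden of supplying such a link remains in any complete proof.
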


\begin{Ex}
Consider the singular complete intersection 
\begin{equation*}
\begin{cases}
-t \xi  z_2 + t  z_2^2 + t  z_1 + \xi u +  z_1 y + u y =0\\
    t^3 \xi - \xi^4 + t y^3 - t^3  z_2 - t \xi u - \xi  z_1 y - t u y + t u  z_2 -  z_1^2 =0
\end{cases}
\end{equation*}
inside $\mathbb{P}(1,1,1,1,2,2)$ with homogeneous variables $t,\,\xi,\,y,\,z_2,\,z_1,\,u$.\ Then, there is a weighted blowup from the point $\mathbf{p_{z_2}}$ that initiates a birational link to a quasismooth family in Tom format with ID \#39928 (see Table \ref{tab:11111}).\ 
\end{Ex}

\begin{Lem} \label{lem:remaining}
Let $X$ be a quasi-smooth member of a family in Table \ref{tab:11111} not treated in Lemma \ref{lem:solid}.\ Then, the model $X'$ is birational to a del Pezzo fibration.\
\end{Lem}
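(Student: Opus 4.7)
The plan is to apply the construction from the proof of Main Theorem \ref{thm:main} to each of the models $X'$ of Table \ref{tab:11111} not excluded by Lemma \ref{lem:solid}, namely those associated to the families \#39569, \#39660, \#39906, \#39912, \#39913, \#39929 and \#39934. In every one of these seven cases $X'$ is embedded in a weighted projective space that contains at least two coordinates $u_0,u_1$ of weight one, and $\iota_{X'}=2$; in particular $\lcm(a_0,a_1)=1<\iota_{X'}$, so Main Theorem \ref{thm:main} applies with $X'$ in place of $X$.

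I would then form the pencil of surfaces $S_\lambda\colon (u_0-\lambda u_1=0)\cap X'$, each satisfying $S_\lambda\sim H|_{X'}$, and consider the induced projection $\pi\colon X'\dashrightarrow\mathbb{P}^1$. As in the proof of Main Theorem \ref{thm:main}, a general $S_\lambda$ is uniruled with $-K_{S_\lambda}$ big, since $\iota_{X'}-\lcm(a_0,a_1)=1>0$. Resolving the indeterminacy of $\pi$ gives $\widetilde{X}'\to\mathbb{P}^1$, and running a $K$-MMP relative to $\mathbb{P}^1$ produces a Mori fibre space $Y\to B$ fitting into a surjective morphism $B\to\mathbb{P}^1$.

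The key step is to exclude the conic bundle alternative of Main Theorem \ref{thm:main} and show that $B\cong\mathbb{P}^1$. For this I would argue, case by case over the seven families, that the generic fibre $S_\lambda$ is birational to a (singular) del Pezzo surface rather than to a ruled surface. Substituting $u_0=\lambda u_1$ into the defining equations of $X'$ and dehomogenising with respect to $u_1$ yields an explicit description of $S_\lambda$ as a complete intersection (or hypersurface) in a smaller weighted projective space, from which quasi-smoothness of the general member and the positivity of the anticanonical self-intersection of its minimal resolution can be read off directly. Since the general fibre is then birational to a del Pezzo surface, the relative MMP on $\widetilde{X}'$ cannot factor through a conic bundle over a surface, forcing $B\cong\mathbb{P}^1$ and exhibiting $Y\to\mathbb{P}^1$ as a del Pezzo fibration.

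The main obstacle is precisely this explicit case-by-case verification that $S_\lambda$ is birational to a del Pezzo surface, which requires identifying the generic fibre and analysing its singularities in each of the seven families. This is a finite calculation of the same flavour as the explicit examples worked out in Subsection \ref{subsect: comparison with Ducat} and in the tables of Case I and Case II.
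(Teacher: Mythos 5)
Your argument rests on the claim that the models $X'$ in Table \ref{tab:11111} have Fano index $2$, and this is false: every $X'$ in that table has Fano index $1$. This is visible by adjunction in the explicit cases, e.g.\ $X'_{6,6}\subset\mathbb{P}(1^2,2,3^3)$ has $-K_{X'}=\mathcal{O}(13-12)=\mathcal{O}(1)$, and $X'_{4,5}\subset\mathbb{P}(1^3,2^2,3)/\bm{\mu}_2$ has $-K_{X'}=\mathcal{O}(10-9)=\mathcal{O}(1)$; you may be conflating Table \ref{tab:11111} with Table \ref{tab:1121}, whose models do have index $2$ and for which the pencil argument via Corollary \ref{cor:nonsolid} is exactly what the paper uses (Lemma \ref{lem:remaining} is preceded by that separate lemma). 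With $\iota_{X'}=1$ the hypothesis $l=\lcm(a_0,a_1)<\iota_{X'}$ of Theorem \ref{thm:main} fails, and the key computation degenerates to $-K_{S_\lambda}=(\iota_{X'}-l)H|_{S_\lambda}+\Diff_{S_\lambda}(0)=\Diff_{S_\lambda}(0)$, which is effective but not big; the general weight-one hyperplane section of an index $1$ Fano $3$-fold is not a del Pezzo surface, so the entire construction collapses at the first step rather than at the conic-bundle exclusion you identify as the main obstacle.

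The paper's actual proof takes a completely different route: it observes that each $X'$ is a member (possibly non-quasi-smooth but terminal) of a family of index $1$ Fano $3$-folds in codimension $1$--$3$ whose general quasi-smooth members were already shown to be non-solid, via explicit Sarkisov links to del Pezzo fibrations initiated by blowups of singular points, in \cite{2raybrown}, \cite{okadaI} and \cite{CampoSarkisov}; it then notes that those link computations persist for the specific terminal models arising here because the monomials governing each step of the $2$-ray game are still present in the equations of $X'$. To repair your proposal you would need either to replace the pencil argument by such an explicit link analysis for each of the seven index $1$ models, or to find some other source of a big anticanonical class on the fibres, which the weight-one pencil does not provide in index $1$.
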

\begin{proof}
Notice that each model $X'$ in Table \ref{tab:11111} has Fano index 1.\ General (quasi-smooth) members in each of these families of $X'$ have been treated in \cite{2raybrown}, \cite{okadaI} and \cite{CampoSarkisov}.\ In particular it was shown that these are non-solid.\ The same results extend to terminal families provided that the key monomials are still present in the equations of $X'$, which is the case for each model.\
\end{proof}

\bibliography{aaBibliograph}
\bibliographystyle{plain}

\end{document}